\def\sqr#1#2{{\vcenter{\hrule height.#2pt
        \hbox{\vrule width.#2pt height#1pt \kern#1pt
                \vrule width.#2pt}
        \hrule height.#2pt}}}
\newtheorem{theorem}{Theorem}[section]
\newtheorem{lemma}[theorem]{Lemma}
\newtheorem{proposition}[theorem]{Proposition}
\newtheorem{corollary}[theorem]{Corollary}
\theoremstyle{definition}
\newtheorem{definition}[theorem]{Definition}
\newtheorem{example}[theorem]{Example}
\newtheorem{question}[theorem]{Question}
\newtheorem{remark}[theorem]{Remark}
\newtheorem{setting}[theorem]{Setting}
\newtheorem{discussion}[theorem]{Discussion}
\newtheorem{defrmk}[theorem]{Definition-Remark}
\DeclareMathOperator{\m}{\mathfrak m}
\DeclareMathOperator{\p}{\mathfrak p}
\DeclareMathOperator{\q}{\mathfrak q}
\DeclareMathOperator{\Spec}{Spec}
\DeclareMathOperator\ord{ord}
\DeclareMathOperator\Min{Min}
\DeclareMathOperator\Proj{Proj}
\DeclareMathOperator{\hgt}{ht}
\DeclareMathOperator{\Rees}{Rees }
\DeclareMathOperator{\Cl}{Cl}
\DeclareMathOperator{\SI}{\mathcal I}
\DeclareMathOperator{\SO}{\mathcal O}
\def\alert#1{\smallskip{\hskip\parindent\vrule%
\vbox{\advance\hsize-2\parindent\hrule\smallskip\parindent.4\parindent%
\narrower\noindent#1\smallskip\hrule}\vrule\hfill}\smallskip}
\begin{document}

\title[Blowing up finitely supported complete ideals ]
{Blowing up finitely supported complete ideals \\ in a regular local ring }

%    Information for first author

\author{William Heinzer}
\address{Department of Mathematics, Purdue University, West
Lafayette, Indiana 47907 U.S.A.}
\email{heinzer@purdue.edu}

\author{Youngsu Kim}
\address{Department of Mathematics, University of California, Riverside,
Riverside, California 92521 U.S.A}
\email{youngsu.kim@ucr.edu}

\author{Matthew Toeniskoetter}
\address{Department of Mathematics, Purdue University, West
Lafayette, Indiana 47907 U.S.A.}
\email{mtoenisk@purdue.edu}

\date \today

\subjclass{Primary: 13A30, 13C05; Secondary: 13E05, 13H05}
\keywords{ Rees valuation, finitely supported ideal, special $*$-simple ideal,
complete ideal, base points, point basis, transform of an ideal,  
local quadratic transform.
}

\maketitle
\bigskip

\begin{abstract}  Let $I$ be a finitely supported complete $\m$-primary ideal of a  regular local 
ring  $(R,\m)$.   We consider singularities   of  the projective  models 
$\Proj R[It]$ and  $\Proj \overline{R[It]}$  over $\Spec R$, 
where  $\overline{R[It]}$ denotes the integral closure of the Rees algebra $R[It]$.
 A  theorem of  Lipman  implies that  the ideal  $I$ has a unique factorization as 
a $*$-product of special $*$-simple complete ideals with possibly negative exponents for
some of the factors.    If  $\Proj \overline{R[It]}$  is regular,   we prove that   
$\Proj \overline{R[It]}$ is the 
regular model obtained by blowing up the finite set of base points of $I$.  
Extending work of Lipman and Huneke-Sally in dimension $2$, we 
prove that every local ring  $S$ on  $\Proj \overline{R[It]}$  that is a unique factorization domain 
is regular.   Moreover, if  $\dim S \ge 2$  and $S$  dominates $R$,  then $S$  is an infinitely near point to $R$,
that is,  $S$ is obtained from $R$ by a finite sequence of local quadratic transforms.  
\end{abstract}

\baselineskip 18 pt

\section{Introduction} \label{c1}

Let  $(R,\m)$ be a regular local ring  of dimension at least 2.
 A regular local ring $S$ that 
dominates $R$ is  {\it infinitely near to} $R$ if $\dim S \ge 2$ and $S$  may be obtained from $R$ by a finite sequence (possibly empty) 
of local quadratic transforms. 
 An infinitely near point $S$ to $R$ is a {\it base point} of    an  ideal $I$  of $R$ if the transform $I^{S}$ of $I$ in $S$ is 
a proper ideal of $S$. 
The set of base points of  an ideal $I$  of $R$ is denoted $\mathcal{BP}(I)$,  and the ideal $I$ is said to be {\it finitely supported} if 
the set $\mathcal{BP}(I)$ is finite.

The infinitely near points to $R$ form a partially ordered set with respect to domination. 
The regular local ring $R$ is the unique
minimal point     with respect to this  partial order.  For an ideal $I$ of $R$,  the set 
$\mathcal{BP}(I)$ of base points of $I$ is a partially ordered subset of the   set of infinitely near points to $R$. 
If the set $\mathcal{BP}(I)$ is finite, we refer to the maximal regular local rings in  $\mathcal{BP}(I)$ as 
{\it terminal base points} of $I$.   If $I$ is a finitely supported ideal, then results of Lipman  \cite[Prop.~1.21, Cor.~1.22]{L}
imply that $\dim S = \dim R$ and $S/ I^S$ is Artinian for 
each base point $S$ of $I$. 
In particular, the ideal $I$ is $\m$-primary.

\begin{definition} \label{1.1}
Let $(R, \m)$ be a regular local ring and let  $I$ be a finitely supported $\m$-primary ideal. 
Let $\Gamma := \mathcal{BP}(I)$ denote the finite set of base points of $I$.  
By successively blowing up the maximal ideals of the   points in  $\Gamma $  we obtain a  regular projective  model\footnote{ We are using the language of 
Section~17,  Chapter VI of 
Zariski-Samuel \cite{ZS2}.   Thus, for $R$ a subring of a field $K$ and  $A$ a finitely generated 
$R$-subalgebra of
 $K$, the   {\it affine model} over $R$  associated to $A$  is the set of local rings 
$A_P$, where  $P$ varies over the set of prime ideals of $A$. A  {\it model}  $M$   over $R$ is a  subset of  
the local subrings of $K$ that contain $R$ that has   the properties:
(i)  $M$ is 
a finite union of affine models over $R$,  and (ii) each valuation ring of $K$ that contains $R$ dominates at most one of the local rings in $M$.  This second condition is called {\it irredundance}.   A model $M$ over $R$ is said to 
be {\it complete} if each valuation ring of $K$ that contains $R$ dominates a local ring in $M$.    The model $M$ is said to be {\it projective} over $R$ if  there exists a finite
set $a_0, a_1, \ldots, a_n$ of nonzero elements of $K$ such that $M$ is the union of the affine models defined  by the rings 
$A_i  = R[\frac{a_0}{a_i}, \frac{a_1}{a_i}, \ldots, \frac{a_n}{a_i}],  i = 0, 1, \ldots, n$. 
The models  we consider are either affine or projective models over a Noetherian integral domain $R$. 
In the language of schemes, these  models  correspond, respectively,   to affine or projective schemes over $\Spec R$.
}  
 $X_{\Gamma}$  over $R$  
and a projective morphism $X_\Gamma \to \Spec R$.  We call $X_\Gamma$ the \textit{saturated regular model} associated to 
the ideal $I$, or more precisely, to the  set $\Gamma = \mathcal{BP}(I)$.   
\end{definition}

The model $X_\Gamma$ may be obtained by first blowing up the maximal ideal $\m$ of $R$
to obtain the regular model $\Proj R[\m t]  = X_1$.  Each infinitely near  point $S$ in  $ \Gamma = \mathcal{BP}(I)$,  
other than $R$, dominates a unique point on the model $X_1$.  The points in $\Gamma$ in the first neighborhood of $R$ 
are obtained from $R$ by one local quadratic transform and  are points on the model
$X_1$.  Each infinitely near point $S$ in $\Gamma  \setminus \{R\}$  is either a point on the model $X_1$ or is an infinitely near point to a 
unique point $S_1$,  where $S_1$ is a point on the model $X_1$.  
Associated to each infinitely near point $S_1 \in X_1$ such that $\dim S_1 = \dim R$,
 there exists a unique coherent $\SO_{X_1}$-ideal sheaf $\SI$ such that the stalk $\SI_{S_1}$
  is the maximal ideal of $\SO_{X_1, S_1}$ and the stalk $\SI_{T} = \SO_{X_1, T}$ for each point $T$ in $X_1 \setminus \{ S_1 \}$ \cite[Lemma~2.3]{L}.

On $X_1$, we blow up the ideal sheaf that is the product of the ideal sheaves that correspond to the points
 $S_1 \in \Gamma  \cap X_1$ to 
obtain the regular model $X_2$. There exist associated projective morphisms $X_2 \to X_1 \to \Spec R$. 
We continue this process to obtain the 
regular model $X_{\Gamma}$ and projective morphism $X_{\Gamma} \to \Spec R$  in which each of the infinitely near points in 
$\Gamma $ has been blown up.

Let $S$ be a regular local ring  infinitely near to $R$. If $S \neq R$, then $S$ is infinitely near to a point on $\Proj R[\m t]$. 
More generally,  if  $S$  is  a regular local ring  infinitely near to $R$,  then $S$  is infinitely near to a point on $X_\Gamma$
 if and only if   $S$ is not a base point of $I$,  that is,   if and only if $S \notin \Gamma$.

The model $X_\Gamma$ and projective morphism $X_\Gamma \to \Spec R$ have the property that 
for each finitely supported ideal $J$ of $R$ such that $\mathcal{BP}(J) \subseteq    \Gamma$, 
there exists a morphism $f:  X_\Gamma   \to  \Proj \overline{R[Jt]}$
that gives the following   commutative diagram:

\begin{equation*}  \label{eq1.1} \tag{1.1}
\xymatrix{
  X_\Gamma   \ar[d]^{f}    \ar[dr]&  \\
 \Proj \overline{R[Jt]}  \ar[r] & \Spec R.
}
\end{equation*}

\begin{remark}\label{Gamma}  Let $R$ be a regular local ring with $\dim R \ge 2$.  A finite set 
$\Gamma$  of points infinitely near to $R$ is the set of base points of a finitely supported
ideal  of $R$ if and only if the set $\Gamma$ satisfies the following conditions.
\begin{enumerate}
\item  $R \in \Gamma$,  
\item  For each $S \in \Gamma$, 
we have $\dim S = \dim R$, and 
\item  For each $S \in \Gamma$,  each of the regular local rings in the unique chain of 
local quadratic transforms from $R$ to $S$ is in $\Gamma$.
\end{enumerate}

Fix a finite set $\Gamma$  of infinitely near points to $R$ that satisfies these  $3$ conditions. 
For each $R_i  \in \Gamma$, let $P_i$ denote the special $*$-simple ideal associated to the 
pair $R \prec R_i$ \cite[Prop.~2.1]{L}. 
Setting $I = \prod_{R_i \in \Gamma} P_i$, we have $\mathcal{BP}(I) = \Gamma$ and $\Proj R[It] = X_{\Gamma}$.
 
\end{remark}

\begin{defrmk}\label{blowup}
Let $I$ be a finitely supported ideal of a regular local ring $R$.   
We say the morphism $f: X_\Gamma \to \Proj  \overline{R[It]}$  of Diagram~\ref{1.1} 
 is {\it biregular at} $S \in \Proj  \overline{R[It]}$
if $f^{-1}(S) = \{ S \}$.

 We say that  the ideal  $I$   has a {\it saturated factorization} if   
 $\Proj  \overline{R[It]}$ is the  regular model $X_\Gamma$,  where $\Gamma := \mathcal{BP}(I)$. 
 In the case where $\dim R = 2$, this terminology is equivalent to that used in \cite[Def. 5.11]{HJLS}.
  As observed in Remark \ref{Gamma}, there exist 
 finitely supported ideals $I^*$ such that $\mathcal{BP}(I^*) = \mathcal{BP}(I)$ and  $\Proj R[I^*t] = X_{\Gamma}$.
\end{defrmk}

In Section~\ref{sec2} we obtain in Corollary~\ref{2.14} the following result: 

\begin{theorem}   \label{1.3}
Let $I$ be a finitely supported  $\m$-primary  ideal of a regular local ring $(R,\m)$.  
   If  $\Proj \overline{R[It]}$   
 is regular, then  $I$ has a saturated factorization,  that is 
  the  morphism  $ f:  X_\Gamma ~  \to   ~  \Proj \overline{R[It]} $  of Diagram~\ref{1.1}
 is an isomorphism.  More generally,    if each local ring $S \in \Proj \overline{R[It]}$  is  a 
unique factorization domain, then $\Proj \overline{R[It]}$ is regular and $I$ has a 
saturated factorization.

\end{theorem}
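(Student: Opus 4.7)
I would prove the two assertions in order, the second subsuming the first once UFD is upgraded to regular.

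\emph{First assertion.} Assume $Y := \Proj \overline{R[It]}$ is regular. The morphism $f : X_\Gamma \to Y$ is a birational projective morphism of regular (hence normal) Noetherian schemes over $\Spec R$, and is the identity away from the exceptional fiber over $\m$. My plan is to invoke Zariski's Main Theorem: it suffices to show that for each closed point $y \in Y$ over $\m$ the fiber $f^{-1}(y)$ is a single point. Let $S = \SO_{Y,y}$; by hypothesis $S$ is a regular local ring of dimension $\dim R$ dominating $R$. The central claim is that $S$ is infinitely near to $R$ and arises as the local ring at a closed point of $X_\Gamma$ above a terminal base point of $I$. I would establish this inductively along the chain $X_\Gamma = X_k \to \cdots \to X_1 \to \Spec R$ defining the saturated model: at each stage the universal property of blowing up, combined with regularity of $S$, lifts $y$ uniquely to a point on $X_i$ whose local ring is a local quadratic transform of the previous stage, and hence ultimately of $R$.

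\emph{Second assertion.} Now assume every local ring on $Y$ is a UFD. Each such local ring is automatically normal, since $\overline{R[It]}$ is integrally closed, so by the first assertion it suffices to show that a local UFD $S$ on $Y$ dominating $R$ must be regular. When $\dim S \leq 2$ this is classical, due to Lipman (and Huneke--Sally in the birational surface setting): a normal two-dimensional UFD has vanishing divisor class group, forcing regularity. For $\dim S \geq 3$ the argument is more delicate, since normal UFDs of higher dimension can be singular in general. Here I would exploit that $S$ sits on the normalized blow-up of a finitely supported ideal, so that Lipman's factorization of $I$ as a $*$-product of special $*$-simple complete ideals yields an explicit generator of the principal ideal $IS$ controlled by the Rees valuations of $I$. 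I would then reduce to the two-dimensional case by localizing $S$ at a suitably chosen prime, apply the surface result, and lift regularity back to $S$ using the UFD (equivalently, trivial class group) hypothesis together with the structure of a minimal generating set of $\m_S$ provided by the $*$-simple factors.

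\emph{Main obstacle.} The decisive step is the higher-dimensional UFD-implies-regular upgrade, since the two-dimensional argument does not extend verbatim. The proof must genuinely use both the finitely supported structure of $I$ (via the $*$-simple factorization) and the principality of $IS$ in a UFD $S$; the technical crux is arranging these two inputs to pin down the embedding dimension of $S$ to equal $\dim S$.
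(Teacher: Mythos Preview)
Your plan has a real gap, and it occurs in both assertions at the same place: you never establish that $\m S$ is principal for a local ring $S$ on $Y=\Proj\overline{R[It]}$ dominating $R$. In the first assertion you invoke the universal property of the blowup $X_1=\Proj R[\m t]\to\Spec R$ to lift $S$ to $X_1$, but that universal property requires $\m S$ to be invertible, hence principal. Regularity of $S$ alone does not give this: $IS$ is principal and $\m$-primary, so the minimal primes of $\m S$ have height~$1$, but in a UFD one only gets $\m S=\rho J$ with $\rho$ the gcd part and $J$ an ideal of height $\ge 2$; nothing you have said rules out $J\ne S$. Without principality of $\m S$ your induction along $X_k\to\cdots\to X_1\to\Spec R$ cannot start.

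The paper's argument supplies exactly this missing step (Lemma~\ref{2.40}) by a Rees-valuation trick rather than anything about $*$-simple factorization or reduction to surfaces. Suppose $\m S=\rho J$ with $\hgt J\ge 2$ and take $V\in\Rees J$. Since the center of $V$ on $S$ has height $\ge 2$ and $S$ is on the blowup of $I$, we have $V\notin\Rees I$; on the other hand, blowing up $\m$ on $\Proj\overline{R[It]}$ shows $V\in\Rees(\m I)$. A short comparison of $\Rees(\m I)$ with $\Rees I$ via transforms in the first neighborhood (Lemma~\ref{lem rees mI}) forces $V=\ord_R$, contradicting $\m V=\rho JV\subsetneq\m_V$. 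Thus $J=S$ and $\m S$ is principal. From here the proof is the clean induction you would expect: $S$ dominates a unique first-neighborhood point $R'$; if $IR'$ is principal then $S=R'$, otherwise $R'\in\Gamma$ and one replaces $(R,I)$ by $(R',I^{R'})$ with strictly fewer base points (Theorem~\ref{2.13}). This simultaneously proves that a UFD $S$ on $Y$ is regular and is either some $\ord_{R_i}$ or an infinitely near point, which via Remark~\ref{1.35} gives the isomorphism $X_\Gamma\cong Y$.

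Your proposed route for the second assertion --- localize to dimension two, apply the surface result, then ``lift regularity back'' using trivial class group and a generating set of $\m_S$ coming from $*$-simple factors --- does not work as stated. Regularity of a localization $S_\p$ says nothing about regularity of $S$, and the $*$-simple factorization of $I$ does not produce generators of $\m_S$ in any usable way; the embedding dimension of $S$ is not visibly controlled by that data. The decisive input is the principality of $\m S$, and that comes from the Rees-valuation argument above, not from the $*$-product structure.
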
 

\begin{remark}  \label{1.31}
 Classical results of Zariski and Abhyankar imply Theorem~\ref{1.3}
in the case where $\dim R = 2$,  cf.  \cite[Prop. 5.12]{HJLS}. 
The assertion about unique factorization domains is also known in
the case of dimension 2   \cite[Prop.~3.1]{L1969}  and \cite[Cor.~1.2]{HS}
\end{remark}

We follow the notation of \cite{L}.
For a Noetherian local domain $(R, \m)$, we denote by $\ord_{R}$ the order function defined by the powers of $\m$.
If $R$ is a regular local ring, or more generally if the associated graded ring of $R$ with respect to $\m$ is a domain, then $\ord_{R}$ defines a rank $1$ discrete valuation on the field of fractions of $R$.
For an ideal $I$ in a Noetherian domain, we denote by $\Rees I$ the Rees valuations of $I$.
For ideals $I$ and $J$ of an integral domain $R$, their $*$-product, denoted $I * J$, is the integral closure $\overline{I J}$ of their ordinary product.
For an ideal $I$ of a UFD $R$ with $\hgt I \ge 2$   and $S$  a UFD overring of $R$ with the same field of fractions, the \emph{transform} of $I$ in $S$, denoted $I^{S}$, is $a^{-1} I S$, where $aS$ is the smallest principal ideal
of $S$  containing $I S$.

\begin{discussion}  \label{1.6}
Let $I$ be a finitely supported complete  $\m$-primary ideal  of a regular local domain 
$(R, \m)$  and let $\Gamma := \mathcal{BP}(I)$.   We have:
\begin{enumerate}
\item
  If $\dim R = 2$,  then associated to each
simple complete ideal factor  $J_i$   of $I$, there exists an infinitely near point 
$R_i \in \Gamma$ such that $J_i   = P_{RR_i}$ is the special $*$-simple ideal   associated to the 
pair $R \prec R_i$ \cite[Prop.~2.1]{L}. 
Furthermore, $\Rees J_i = \{\ord_{R_i}\}$ (cf.  \cite[Prop.~14.4.10]{SH}), so the Rees valuation rings of $I$ are in one-to-one correspondence with the distinct simple factors of $I$ as a product of simple complete ideals 
(cf. \cite[Prop.~10.4.8]{SH}).

\item 
In the case where $\dim R \ge 3$, $\Rees I \subseteq    \{\ord_{R_i}~|~ R_i \in \Gamma \}$ \cite[Prop. 4.3]{HK1}.
A special $*$-simple ideal $P_{RR_i}$ associated to an infinitely near point $R_i$ contains $\ord_{R_i}$, but it often
contains additional Rees valuation rings $\ord_{R_j}$ with $R_j \in \Gamma$ and $R_j \ne R_i$.
\item  
If $I$ has saturated factorization, then $\Rees I = \{ \ord_{R_i} \mid R_i \in \Gamma \}$.
If $\dim R = 2$, then this property characterizes the ideals with a saturated factorization.
However, if $\dim R \ge 3$, then Item~2 allows the construction of finitely supported complete ideals $I$ such that 
$\Rees I = \{ \ord_{R_i} \mid R_i \in \Gamma \}$ and yet $I$ does not have a saturated factorization,  that is,
there exist normal local domains $S \in \Proj \overline{R[It]}$ that are singular,  see Example~\ref{samerees}.
\item
Assume that that $\ord_{R_i} \in \Rees I$ for each $R_i \in \Gamma$.
We prove in  Theorem~\ref{3.10} that the morphism 
$ f:  X_\Gamma ~  \to   ~  \Proj \overline{R[It]} $  of Diagram~\ref{1.1}
is biregular at  each  $S \in \Proj \overline{R [I t]}$    that 
 has torsion divisor class group.
Therefore for each singular point $S \in \Proj \overline{R [I t]}$, the divisor class group of $S$ is nontorison.
Thus the singular local domains in examples such as
 Example~\ref{samerees} must   have nontorision divisor class group.
 \end{enumerate}
 \end{discussion}  

\section{Regular blowup implies   saturated factorization } \label{sec2}

In Theorem~\ref{2.13}, we prove that a local UFD on the blowup of a finitely supported ideal is regular.
Lemma~\ref{2.10} and Corollary~\ref{2.11} are special cases of Lemma~\ref{2.40}, where we prove that a UFD $S$ on the blow-up of on a finitely supported $\m$-primary ideal $I$ has the property that $\m S$ is principal.
It follows that $S$ dominates a unique point on $\Proj R [\m t]$, allowing us to apply an inductive argument on the number of base points of $I$.

\begin{lemma} \label{2.10}
Let   $(R,\m)$ be a regular local domain, let $I$ be an $\m$-primary ideal, and let $S$ be a UFD on $\Proj \overline{R [I t]}$ that dominates $R$.
If there exists a DVR $V$ that dominates $S$ such that $\m V$ is the maximal ideal of $V$, then $\m S$ is principal.
It follows that $S$ dominates a local ring on $\Proj R [\m t]$.
\end{lemma}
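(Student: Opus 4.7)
The plan is to exploit the UFD structure of $S$ together with the valuation $v$ of $V$, normalized so that a uniformizer of $V$ has value one. Fix a regular system of parameters $x_1, \dots, x_d$ for $\m$. Since $\m V$ is the maximal ideal of the DVR $V$ and is generated by $x_1, \dots, x_d$, we have $\min_i v(x_i) = v(\m V) = 1$; after reindexing, take $v(x_1) = 1$.

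The first step is to show that $\hgt(\m S) = 1$. Since $S$ is a local ring on the normalized blowup $X = \Proj \overline{R[It]}$, the invertible sheaf $I \SO_X$ specializes at $S$ to a principal ideal $IS = aS$ with $a \in I$ a nonzero non-unit of $S$. The $\m$-primary hypothesis gives $\m^n \subseteq I$ for some $n$, hence $(\m S)^n \subseteq aS \subseteq \m S$, which forces $\rad(\m S) = \rad(aS)$. By Krull's principal ideal theorem every minimal prime of $aS$, and therefore of $\m S$, has height one, so $\hgt(\m S) = 1$.

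The second, main step uses the UFD structure. Write $\m S = cJ$, where $c = \gcd(x_1, \dots, x_d)$ is computed in the UFD $S$ and $J = (x_1/c, \dots, x_d/c)S$ has generators whose gcd is a unit. Applying $v$ and using $v(\m S) = \min_i v(x_i) = 1$ yields
\[ v(c) + v(J) = 1, \qquad v(J) := \min_{j \in J,\, j \neq 0} v(j). \]
Because $V$ dominates $S$, any element of $S$ with $v$-value zero is a unit of $S$. Suppose $v(c) = 0$: then $c$ is a unit of $S$ and $\m S = J$; but no prime element of $S$ divides all generators of $J$, so $J$ lies in no principal prime, hence in no height-one prime (since $S$ is a UFD), contradicting step one. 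Therefore $v(c) = 1$ and $v(J) = 0$, so some $x_j/c$ is a unit of $S$, giving $J = S$ and $\m S = cS$. Since $v(x_1) = v(c) = 1$, the element $x_1/c$ is a unit of $V$ and thus of $S$, so in fact $\m S = x_1 S$. Consequently $R[\m/x_1] \subseteq S$, and contracting the maximal ideal of $S$ to $R[\m/x_1]$ exhibits the point of $\Proj R[\m t]$ dominated by $S$.

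The main obstacle is the second step: combining the UFD gcd decomposition with the single equation $v(c) + v(J) = 1$ coming from $V$, where the height-one conclusion of step one is what rules out the branch in which the gcd $c$ is a unit.
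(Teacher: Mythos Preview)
Your proof is correct and follows essentially the same approach as the paper's: both establish $\hgt(\m S)=1$ from the principality of $IS$, factor $\m S$ as a gcd times an ideal with trivial gcd, and use the domination by $V$ with $v(\m)=1$ to force the cofactor to be all of $S$. The only organizational difference is that the paper uses the height-one fact up front to assert the gcd $\rho$ is a nonunit and then concludes directly, whereas you carry the possibility $v(c)=0$ into a case split and eliminate it there; the content is the same.
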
  

\begin{proof}
Since $I$ is $\m$-primary and $S \in \Proj \overline{R[It]}$, the ideal $\m S$ has height one.  
Since $S$ is a UFD,
$\m S = \rho J$,  where $\rho$ is a nonzero nonunit of $S$ and either $J = S$ or $J$ is an ideal of $S$ 
with $\hgt J \ge 2$.   Since $V$ dominates $S$ and $\m V$ is the maximal ideal of $V$,  we must have
$\m V = \rho V$ and $J = S$. 
\end{proof}

\begin{corollary}  \label{2.11} 
 Let   $(R,\m)$ be a regular local domain  and  let $V = \ord_R$.   Let 
 $I$ be  an $\m$-primary ideal and  let $S$ be  the local domain   on $ \Proj \overline{R[It]}$ 
 dominated by $V$.  If $S$   is a UFD, then $S = V$ and   $V \in \Rees I$. 
\end{corollary}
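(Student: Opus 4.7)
The plan is to apply Lemma~\ref{2.10} taking the DVR in its statement to be $V = \ord_R$ itself, and then to trace back dominations through the blowup of $\m$. I would first verify the hypotheses of Lemma~\ref{2.10}. Since $R$ is regular, the order function $\ord_R$ defines a rank-one discrete valuation (as recalled in the discussion preceding Discussion~\ref{1.6}), so $V$ is a DVR; it dominates $S$ by assumption; and for any regular parameter $x$ of $R$ the relation $\ord_R(x) = 1$ exhibits $x$ as a uniformizer of $V$, so $\m V$ equals the maximal ideal of $V$. Lemma~\ref{2.10} then yields that $\m S$ is principal and that $S$ dominates some local ring $T$ on $\Proj R[\m t]$.

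Next I would identify $T$ with $V$. The generic point of the exceptional divisor of $\Proj R[\m t]$ has local ring equal to $\ord_R = V$, so $V$ itself is a point of $\Proj R[\m t]$. The chain $V \supseteq S \supseteq T$ shows that $V$ dominates $T$, and the irredundance clause in the definition of a model (see the footnote to Definition~\ref{1.1}) ensures that $V$ dominates at most one point of $\Proj R[\m t]$. Therefore $T = V$, and combining $S \supseteq T = V$ with the hypothesis $V \supseteq S$ forces $S = V$.

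The conclusion $V \in \Rees I$ is then immediate: $V = S$ is a DVR on $\Proj \overline{R[It]}$ dominating $R$, and such DVRs are exactly the Rees valuation rings of $I$, realized as the localizations of $\overline{R[It]}$ at the minimal primes of $\m \overline{R[It]}$. The only step requiring real content is the identification $T = V$ via irredundance; the rest is a routine check of the hypotheses of Lemma~\ref{2.10} and an appeal to the standard description of Rees valuations.
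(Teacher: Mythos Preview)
Your proof is correct and follows essentially the same route as the paper's: apply Lemma~\ref{2.10} with the DVR $V=\ord_R$, use that $V$ itself lies on $\Proj R[\m t]$ together with irredundance to identify the dominated point $T$ with $V$, and conclude $S=V$. You have simply spelled out in detail the steps (verifying the hypotheses of Lemma~\ref{2.10}, invoking irredundance, and deducing $V\in\Rees I$) that the paper's two-line proof leaves implicit.
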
  

\begin{proof}
Lemma~\ref{2.10} implies that $S$ dominates a local ring on $\Proj R[\m t]$. Since 
$V = \ord_R \in \Proj R[\m t]$ and $V$ dominates $S$, we have $S = V$. 
\end{proof}  

Lemma~\ref{lem rees mI} compares the Rees valuations of an ideal $I$ with those of the transforms of $I$ in the first neighborhood of $R$.

\begin{lemma}\label{lem rees mI}
Let $(R, \m)$ be a  regular local domain  with $\dim R \ge 2$ and let $I$ be an $\m$-primary ideal 
that has only finitely many base points   $R_1, \ldots, R_n$
 in the first neighborhood of $R$.
Then we have 
\begin{equation}
\{\ord_{R}\}  ~ \cup  ~ \Rees I  ~ =    ~ \{ \ord_{R}\}  ~ \cup   ~   \bigcup_{i=1}^{n} \Rees I^{R_i},
\end{equation}
	where $I^{R_i}$ denotes the transform of $I$ in $R_i$.  
Furthermore, we have 
\begin{equation}
	\Rees \m I   ~ =  ~ \Rees \m ~\cup ~ \Rees I ~= ~  \{\ord_{R}\}  ~ \cup  ~   \Rees I.
\end{equation}
\end{lemma}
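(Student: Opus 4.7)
The plan is to establish the two displayed equations separately.

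First I would prove the second equation. The equality $\Rees(\m I) = \Rees \m \cup \Rees I$ is an instance of the product formula for Rees valuations, $\Rees(JK) = \Rees J \cup \Rees K$, which holds for nonzero ideals in a Noetherian domain (cf.~\cite[Ch.~10]{SH}). The equality $\Rees \m = \{\ord_R\}$ holds because the maximal ideal of a regular local ring admits a unique Rees valuation, namely the $\m$-adic order valuation $\ord_R$.

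For the first equation I would analyze the normalized blowup $\overline{Y} := \Proj \overline{R[It]}$ via the auxiliary model $X_1 := \Proj R[\m t]$. Writing $r := \ord_R I$, one has on $X_1$ the decomposition $I \SO_{X_1} = \m^r \SO_{X_1} \cdot \widetilde J$, where $\widetilde J$ is an ideal sheaf that equals $\SO_{X_1}$ away from the first-neighborhood base points $R_1, \ldots, R_n$ and whose stalk at $R_i$ is the transform $I^{R_i}$. Consequently, above every non-base-point closed point of the exceptional divisor $E_1 \subset X_1$, the morphism $\overline Y \to X_1$ is an isomorphism, whereas above each $R_i$ it is the normalized blowup of $I^{R_i}$ on $\Spec R_i$.

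The Rees valuations of $I$ then correspond to the generic points of the irreducible components of the exceptional fiber of $\overline Y \to \Spec R$. These split into two families: the strict transform of $E_1$, whose generic point yields $\ord_R$; and, for each base point $R_i$, the exceptional divisors of $\overline Y \to X_1$ lying above $R_i$, whose generic points yield precisely the valuations in $\Rees I^{R_i}$. Both inclusions of the first equation are then immediate: any $v \in \Rees I \setminus \{\ord_R\}$ lies in some $\Rees I^{R_i}$, and conversely every element of $\bigcup_i \Rees I^{R_i}$ arises from an exceptional component of $\overline Y \to \Spec R$ and hence belongs to $\Rees I$.

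The main technical hurdle is establishing the local structure claim: that $\overline Y$ above each base point $R_i$ is genuinely the normalized blowup of $I^{R_i}$ on $\Spec R_i$. This requires a careful argument that the normalized blowup of $I$ factors suitably through $X_1$, and that a valuation divisorial over $R$ whose center on $X_1$ is the closed point $R_i$ restricts to a valuation divisorial over $R_i$---for the latter, one uses that the residue field of each $R_i$ is a finite extension of $R/\m$, so transcendence degrees are preserved.
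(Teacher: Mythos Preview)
Your proposal has two genuine gaps. For Equation~2, the product formula $\Rees(JK) = \Rees J \cup \Rees K$ that you invoke is false in general: \cite[Prop.~10.4.5]{SH} gives only the inclusion $\Rees J \cup \Rees K \subseteq \Rees(JK)$, and the reverse can fail even in a polynomial ring. For instance, with $J=(x,y)$ and $K=(x,z)$ in $k[x,y,z]$ one has $\Rees J = \{v_J\}$ and $\Rees K = \{v_K\}$ where $v_J(z)=v_K(y)=0$, yet $\Rees(JK)$ also contains the valuation $v$ with $v(x)=v(y)=v(z)=1$, which is needed to exclude $x$ from $\overline{JK}$. So the inclusion $\Rees(\m I) \subseteq \{\ord_R\} \cup \Rees I$ is real content, not a citation.

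For Equation~1, the morphism $\overline Y = \Proj\overline{R[It]} \to X_1 = \Proj R[\m t]$ on which your component analysis rests need not exist: it would require $\m\mathcal O_{\overline Y}$ to be invertible, and this already fails for $I = (x^2, y_1, \ldots, y_{d-1})$, where the point of $\Proj R[It]$ dominated by $\ord_R$ has non-principal $\m$-extension (cf.\ Example~\ref{exam two bps}). The paper avoids both problems by reversing the order. It proves Equation~1 first with an elementary valuation-by-valuation argument: given $V \in \Rees I$, pass to the local quadratic transform $R'$ of $R$ along $V$; if $IR'$ is principal then $V = \ord_R$, and otherwise $R'$ is some base point $R_i$ and $V \in \Rees I^{R_i}$ by \cite[Prop.~3.11]{HKT}. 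Equation~2 then follows from Equation~1 together with the one-sided inclusion of \cite[Prop.~10.4.5]{SH}, using that $\m I$ and $I$ have the same transform in each $R_i$. Your geometric picture becomes correct if you replace $\overline Y$ by the normalized blowup of $\m I$, which \emph{does} map to $X_1$; but that computes $\Rees(\m I)$ directly, and comparing it with $\Rees I$ is again the nontrivial half of Equation~2.
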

\begin{proof}
Let $R'$ be a base point of $I$ in the first neighborhood of $R$.
If $\dim R' < \dim R$,  then there exist infinitely many regular local rings $S$ in $\Proj R[\m t]$ such that $S \subset R'$. 
Notice that $R' = S_{\m_{R'} \cap S}$, where $\m_{R'}$ is the maximal ideal of $R'$. 
Since taking transforms is transitive \cite[Prop.~1.5.iv]{L} and $R'$ is a base point of $I$, 
the ring $S$ is also a base point of $I$. 
Thus the hypothesis that $I$ has only finitely many base points in the first neighborhood of $R$
implies that $\hgt I^{R_i} = \dim R_i = \dim R$ for each $i \in \{1, \ldots, n\}$.  

The $\supseteq$ inclusion of Equation~1 follows from \cite[Prop.~3.11]{HKT}.

To see the $\subseteq$ inclusion of Equation~1, let $V \in \Rees I$, and let $R'$ denote the local quadratic transform of $\m$ along $V$.
If $I R'$ is not principal, then $R'$ is a base point of $I$, so $R' = R_i$ for some $i$.
Thus $V \in \Rees I^{R_i}$ by \cite[Prop.~3.11]{HKT}.
If $I R'$ is principal, then $R'$ dominates the blowup of $I$ along $V$, that is, $R'$ dominates $V$. Therefore we have $R' = V$ and this implies $V = \ord_{R}$.

For Equation~2,
we have $\Rees \m = \{\ord_R\}$,  and it is true in general that the Rees valuation rings of a product $IJ$
of nonzero ideals $I$ and $J$  includes $\Rees I \cup \Rees J$ (\cite[Prop.~10.4.5]{SH}).  The reverse inclusion holds in Equation~2 by Equation~1, since $\m I$ and $I$ have the same transform in each $R_i$.
\end{proof}

\begin{lemma}\label{2.40}
Let $(R, \m)$ be a regular local domain with $\dim R \ge 2$ and let $I$ be an $\m$-primary ideal of $R$ with finitely many base points in the first neighborhood.
Let $S$ be a local ring on $\Proj \overline{R[It]}$ that dominates $R$.
If either
\begin{enumerate}
\item $S$ is a UFD, or
\item $S$ has torsion divisor class group and either $\ord_{R} \in \Rees I$ or $S \not\subset \ord_{R}$,
\end{enumerate}
then $\m S$ is principal.
\end{lemma}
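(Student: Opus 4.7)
My plan is to prove $\m S$ is principal by showing that the proper birational morphism $\pi: \Proj \overline{R[\m I t]} \to \Proj \overline{R[It]}$, induced by the inclusion $\m I \subseteq I$, is an isomorphism at $S$. Since $\m$ is locally principal on $\Proj \overline{R[\m I t]}$ via a morphism to $\Proj R[\m t]$ coming from the universal property of blowing up (which I would verify under the finite base-point hypothesis), this immediately yields $\m S$ principal.

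The structure of $\pi$ is controlled by Lemma~\ref{lem rees mI}: since $\Rees \m I = \{\ord_R\} \cup \Rees I$, the exceptional divisor of $\pi$ is supported on the $\ord_R$-component of $\Proj \overline{R[\m I t]}$ precisely when $\ord_R \notin \Rees I$, and is empty otherwise. By irredundance of the model, $\ord_R$ dominates a unique local ring $S_0 \in \Proj \overline{R[It]}$; the $\ord_R$-component of $\Proj \overline{R[\m I t]}$ is contracted to $S_0$ under $\pi$. Hence $\pi$ is an isomorphism at every point $S \neq S_0$, handling most cases immediately.

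The three sub-cases then proceed as follows. In the sub-case of (2) with $\ord_R \in \Rees I$, there is no exceptional divisor at all and $\pi$ is an isomorphism everywhere, so $\m S$ is principal. In the sub-case of (2) with $S \not\subset \ord_R$, the DVR $\ord_R$ does not contain $S$ as a subring, so $\ord_R$ cannot dominate $S$, forcing $S \neq S_0$. In case (1), with $S$ a UFD, the only concern is $S = S_0$; here an exceptional divisor contracted to $S$ would produce a non-principal Weil divisor on the factorial local ring $S$, contradicting $\Cl(S) = 0$, so $\pi$ is an isomorphism at $S$ by Zariski's Main Theorem applied to the proper birational morphism with normal target.

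The main obstacle is verifying the local principality of $\m$ on $\Proj \overline{R[\m I t]}$ (so that the factorization through $\Proj R[\m t]$ exists) and formulating precisely the divisor-class argument in case (1) that rules out the exceptional divisor over a factorial point. The torsion-class-group hypothesis in case (2) plays a complementary role to the factoriality used in case (1), ensuring the exceptional behavior of $\pi$ is sufficiently restricted in each subcase by combining the Rees-valuation condition on $\ord_R$ with the divisor-class restrictions coming from torsion.
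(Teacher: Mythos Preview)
Your framework---studying the proper birational morphism $\pi:\Proj\overline{R[\m It]}\to\Proj\overline{R[It]}$ and invoking Lemma~\ref{lem rees mI} to see that its only possible exceptional divisor is $\ord_R$---is a reasonable geometric reformulation and is close in spirit to what the paper does. The gap is in how you pass from ``no exceptional divisor over $S$'' to ``$\pi$ is an isomorphism at $S$.'' That implication is false in dimension $\ge 3$: proper birational morphisms between normal schemes can be \emph{small} (contract no divisors) without being isomorphisms. The paper's own Example~\ref{samerees} exhibits exactly this: there $I=P_1P_2$ has $\ord_R\in\Rees I$, so $\Rees\m I=\Rees I$ and $\pi$ has no exceptional divisor at all, yet $\Proj\overline{R[It]}$ carries a singular point $S$ with $\m S$ \emph{not} principal. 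This directly refutes your sub-case ``$\ord_R\in\Rees I$'' of (2), and the same issue undermines the sub-case ``$S\not\subset\ord_R$.'' Notice that your argument for (2) never used the torsion hypothesis---that should have been a red flag, since the example shows the conclusion fails without it.

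In case~(1) the specific claim ``an exceptional divisor contracted to $S$ would produce a non-principal Weil divisor on the factorial local ring $S$'' is also incorrect: exceptional divisors live upstairs, not on $S$, and blowing up the maximal ideal of any regular local ring already gives an exceptional divisor over a factorial point. Zariski's Main Theorem yields only connectedness of fibers, not the isomorphism you want.

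What the paper does instead is use the class-group hypothesis to produce an explicit factorization and then close with a valuation computation. In case~(1) write $\m S=\rho J$ with $\rho$ a nonunit and $\hgt J\ge 2$ (this is where factoriality enters); in case~(2) write $\m^nS=\rho J$ for suitable $n$ (this is where torsion enters). Any Rees valuation $V$ of $J$ lies in $\Rees\m I\setminus\Rees I$, hence $V=\ord_R$ by Lemma~\ref{lem rees mI}. Since the center of $V$ on $S$ contains $J$, and since $\sqrt{\rho S}=\sqrt{IS}$ lies in that center as well, both $\rho$ and $J$ land in $\m_V$, forcing $\m V=\rho JV\subset\m_V^2$. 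This contradicts $\ord_R(\m)=1$. That last contradiction, tied to the specific valuation $\ord_R$, is the missing ingredient in your argument.
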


\begin{proof}
Assume that $S$ is a UFD.    By way of contradiction,   
suppose  that $\m S$ is not principal.  We may write $\m S = \rho J$,
where  $\rho$ is a nonzero nonunit  of $S$ and  $J$ is a proper ideal  of $ S$ with $\hgt J \ge 2$.
Let $V$ be a Rees valuation ring of $J$.  
The ring $S$ is on the blowup of $I$ and the center of $V$ on $S$ is of height at least two since it contains $J$. Therefore we have $V \notin \Rees I$.

Notice that the blowup of $\m I$ may  be obtained by first  blowing up $I$ and then blowing up the  the extension  of $\m$  to  the model  $\Proj \overline{R[It]}$.  
 It follows that $V  \in \Rees \m I$. 
 By  Lemma \ref{lem rees mI},  we must have $V = \ord_R$.  
The ideals $\rho S$ and $IS$ have the same radical and $I$ is contained in the maximal ideal of $V$.
However, this implies that $\m V = \rho J V$ is properly contained in the maximal ideal of $V$. 
This contradicts  the fact that $\m V$ is the maximal ideal of $V = \ord_R$.  
We conclude that $\m S$ is principal.

Assume that $S$ has torsion divisor class group.  As in the previous case, suppose that $\m S$ is not principal.
Since $\hgt \m S = 1$, there exists an integer $n > 0$ such that $\m^n S = \rho J$ for some element $\rho \in S$ and some ideal $J $ of $S$, where either $J = S$ or  $\hgt J \ge 2$.  
Since $S$ is a local domain, invertible ideals  in $S$  are principal.  Moreover, if  a power of an ideal is
invertible,  then the ideal is invertible.  Thus the condition that $\m S$ is not principal implies 
that $\m^n S$ is not principal, so $J$ is a proper ideal of $S$.
By the same argument as the previous case, it follows that $V$ is a Rees valuation of $\m^n I$.
By Lemma~\ref{lem rees mI},  we have  $V = \ord_{R}$.
This implies that $S \subset \ord_{R}$ and that $\ord_{R} \notin \Rees I$,   which completes the proof.
\end{proof}

Let $S$ be a local unique factorization domain on the blowup of a finitely supported ideal of a regular
local domain $R$.   In Theorem~\ref{2.13},  we generalize to the case where $\dim R > 2$ a result  
of    Huneke and Sally \cite[Cor.~1.2]{HS} 
 and Lipman \cite[Prop. 3.1]{L1969} for the case where $\dim R = 2$.  If $\dim R = 2$, then every 
 $2$-dimensional local UFD
 that birationally dominates   $R$ 
 is on the blowup of a finitely supported ideal of $R$.

\begin{theorem}  \label{2.13}
 Let   $(R,\m)$ be a regular local domain with $\dim R \ge 2$ and let 
 $I$ be a finitely supported  $\m$-primary ideal. Let 
$S$  be a local ring on $\Proj \overline{R[It]}$ that dominates $R$. 
If $S$ is a UFD, then $S$ is a regular local ring.   Moreover, 
either $S =  \ord_{R_i}$ for some base point  $R_i$  of $I$,   
or $S$ is an infinitely near point to $R$.\footnote{By definition, an infinitely near point has dimension at least two.} 
\end{theorem}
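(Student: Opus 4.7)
The plan is to proceed by induction on $n := |\mathcal{BP}(I)|$. Since $R \in \mathcal{BP}(I)$ for every $\m$-primary $I$, the base case is $n = 1$ and $\mathcal{BP}(I) = \{R\}$; here Remark~\ref{Gamma} identifies $\Proj \overline{R[It]}$ with $\Proj R[\m t]$, whose relevant local rings are $\ord_R$ and the local quadratic transforms of $R$. The inductive step reduces a general configuration to one with strictly fewer base points by replacing $R$ with a local quadratic transform $R'$ and $I$ with the transform $I^{R'}$.

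For the inductive step, the UFD hypothesis and Lemma~\ref{2.40} yield $\m S$ principal. By the universal property of blowing up $\m$, the inclusion $R \hookrightarrow S$ extends uniquely to a morphism $\Spec S \to \Proj R[\m t]$, exhibiting $S$ as dominating a unique local ring $T$ on $\Proj R[\m t]$. If $T = \ord_R$, then the DVR $T$ has $S$ as an overring in their common fraction field, so valuation-ring theory forces $S = T = \ord_R$; this gives the first alternative with $R_i = R \in \mathcal{BP}(I)$. Otherwise, $T = R'$ is a local quadratic transform of $R$.

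Next I split on whether $R' \in \mathcal{BP}(I)$. If not, then $I R'$ is principal, so the morphism $\Proj \overline{R[It]} \to \Spec R$ is a local isomorphism above $R'$, forcing $S = R'$, an infinitely near point to $R$. If $R' \in \mathcal{BP}(I)$, then $I^{R'}$ is a finitely supported $\m_{R'}$-primary ideal and $\mathcal{BP}(I^{R'}) \subsetneq \mathcal{BP}(I)$: every base point of $I^{R'}$ is infinitely near to $R'$, hence to $R$, and thus lies in $\mathcal{BP}(I)$, but $R \in \mathcal{BP}(I) \setminus \mathcal{BP}(I^{R'})$ makes the containment strict. Using the factorization $I R' = a \cdot I^{R'}$ together with compatibility of blowups with transforms, $S$ is a UFD on $\Proj \overline{R'[I^{R'} t]}$ dominating $R'$, and the inductive hypothesis supplies the regularity of $S$ together with $S = \ord_{R_i}$ for some $R_i \in \mathcal{BP}(I^{R'}) \subseteq \mathcal{BP}(I)$, or $S$ infinitely near to $R'$ (hence to $R$).

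The main obstacle is verifying the compatibility of blowups with transforms in the form required here: that a local ring $S$ on $\Proj \overline{R[It]}$ which dominates an infinitely near point $R'$ of $R$ is also a local ring on $\Proj \overline{R'[I^{R'} t]}$. This should follow from the identity $I R' = a \cdot I^{R'}$ on affine charts, together with the compatibility of integral closure under the relevant localizations, but requires careful bookkeeping at the level of the Proj construction.
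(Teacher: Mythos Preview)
Your proof is correct and mirrors the paper's approach: invoke Lemma~\ref{2.40} to get $\m S$ principal, dominate a point $R'$ on $\Proj R[\m t]$, and induct on $|\mathcal{BP}(I)|$ via the transform $I^{R'}$ (the paper organizes the cases by whether $IR'$ is principal rather than separately treating $T = \ord_R$, and it asserts $S \in \Proj \overline{R'[I^{R'}t]}$ without further comment, but these differences are cosmetic). One small note: your base-case citation of Remark~\ref{Gamma} is misdirected---that remark constructs an $I$ from a given $\Gamma$, not the converse---but the base case is in fact redundant, since when $\mathcal{BP}(I) = \{R\}$ every first-neighborhood $R'$ has $IR'$ principal and your Case~2a (or Case~1) already terminates the argument.
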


\begin{proof}
By Lemma~\ref{2.40}(1), $\m S$ is principal, so there exists a unique regular local ring 
$R'$ in the first neighborhood of $R$ such that $S$ dominates $R'$.
If $IR'$  is principal, then $R'$ dominates a unique local ring   on  $ \Proj \overline{R[It]}$.   
Since $S$ dominates $R'$, we have 
$R' = S$.  If $\dim R' = 1$, then $R' = \ord_R = S$  while if $\dim R' \ge 2$,  
then $R' = S$ is an infinitely near point to $R$ in the first neighborhood. 

If $IR'$ is not principal, then the transform $I' := I^{R'}$ is a proper ideal, so $R'$ is a base point of $I$ in the first neighborhood of $R$.
It follows that $I'$ is primary for the maximal ideal $\m'$ of $R'$ \cite[Prop. 1.21]{L} and is finitely supported with $\mathcal {BP}(I')$ a proper subset of $\mathcal {BP}(I)$.  Also   $S$  dominates $R'$ 
and $S  \in \Proj \overline{R'[I't]}$.
The assertions in  Theorem~\ref{2.13} therefore follow by a
straightforward induction argument on the number of base points of $I$.     
\end{proof}

\begin{discussion}  \label{1.32}
Let $I$ be a finitely supported  $\m$-primary  ideal of a regular local domain $(R,\m)$ with $\dim R  = d \ge 2$.  
Let $\Gamma :=  \mathcal {BP}(I) = \{R = R_0, R_1, \ldots, R_n\}$ denote the base points of $I$. 
 We observe the following:
\begin{enumerate}
\item
The set $\Rees I$ of Rees valuation rings of $I$ is a  nonempty  subset of the set $\{\ord_{R_i}\}_{i =0}^n$, cf. \cite[Prop.~4.3]{HK1}.     
\item
The DVRs $V \in \Proj\overline{R[It]}$ that dominate $R$ are precisely the DVRs $V \in \Rees I$ \cite[Theorem 10.2.2(3)]{SH}.
\item
 The DVRs   in  $ X_\Gamma $ that dominate $R$ are precisely the DVRs   in the set  $\{\ord_{R_i}\}_{i =0}^n$ (see Discussion~\ref{1.6}).
\item
Items 2 and 3 imply that the morphism $f: X_\Gamma  \to \Proj\overline{R[It]}$ of  Diagram~\ref{eq1.1} is an isomorphism on dimension one  local rings if and only if  $\ord_{R_i} \in \Rees I$ 
for each base point $R_i$ of $I$.
\end{enumerate}
Therefore  if  $f: X_\Gamma \to \Proj\overline{R[It]}$
is an isomorphism, then  the order valuation ring
of each base point of $I$ is a Rees valuation ring of $I$.
\end{discussion}  

\begin{remark}  \label{1.35}  Let $I$ be a finitely supported  $\m$-primary  ideal of a regular local 
domain $(R,\m)$ and let 
$S \in \Proj \overline{R[It]}$ be a local domain of dimension at least 2   that dominates $R$.  
Since $S \in \Proj \overline{R[It]}$, 
the  ring 
$S$  is not a base point of $I$.  Let $f:X_\Gamma \to \Proj \overline{R[It]}$ be the morphism  in Diagram~\ref{1.1}.
The local domains  $T \in X_\Gamma$ such that $f(T) = S$ are regular local 
domains. Each such $T$ is either  infinitely near to $R$ or $T = \ord_{R_i}$ for some base point $R_i$ of $I$. 
For each $T$ there exist 
injective local homomorphisms $R \hookrightarrow S \hookrightarrow T$.   Since  $T$  is 
birational over $S$,  we have $\dim S \ge \dim T$  \cite[Theorem~15.5]{M}.
If $S \ne T$ and $\dim S = \dim T$,  then  
  Zariski's Main Theorem \cite[(37.4)]{N}  implies  that $\m_S T$ 
is not primary for the maximal ideal of $T$,  where $\m_S$ is the 
maximal ideal of $S$.   Hence there exists a nonmaximal prime ideal $P$ of $T$ 
such that $\m_S \subset P$, and  $T_P \in X_\Gamma$ is a  regular  local ring in the fiber of $f$ over $S$ with  
$\dim T_P < \dim S$.  If $\dim S = 2$, then $T_P$ is a DVR in $X_\Gamma$ that dominates $R$.  This implies
that $T_P = \ord_{R_i}$, where $R_i$ is one of the finitely many  base points of $I$. 
Thus we have
\begin{enumerate}
\item  
Let $\Gamma :=  \mathcal {BP}(I) = \{R = R_0, R_1, \ldots, R_n\}$ denote the base points of $I$. 
If,  as in Discussion~\ref{1.32}.4,   we have 
 $\ord_{R_i} \in \Proj \overline{R[It]}$  for each $R_i \in \mathcal{BP}(I)$,  then the morphism 
$f:  X_{\Gamma} \to \Proj \overline{R[It]}$ is biregular at each $S \in \Proj \overline{R[It]}$  with $\dim S \le 2$  and  the singular locus of $\Proj\overline{R[It]}$ has codimension at least 3.  
\item 
If $\dim R = 3$  and each $\ord_{R_i} $ is a Rees valuation ring of $I$, then 
$\Proj \overline{R[It]}$ has only finitely many singular points.  
\end{enumerate}
 
 Without assuming that each $\ord_{R_i}$ is a Rees valuation of $I$,  let  
 $S \in \Proj \overline{R[It]}$ be a local domain of dimension at least 2   that dominates $R$.  Then  
$S \in X_\Gamma$ if and only if $S$ is infinitely near to $R$.   This follows because 
each local domain   $T \in X_\Gamma$ that dominates $R$ is 
infinitely near to $R$ and if $T$ dominates $S$,  then the unique finite sequence of local quadratic transforms 
of $R$ to $T$ goes through $S$ if and only if $S$ is infinitely near to $R$.  Thus if $S$ is infinitely near to $R$
and $T \in X_\Gamma$ dominates $S$, then either $S = T$ or $T \neq S$ is infinitely near to $S$.  
But if $T$ 
is infinitely near to $S$,  then $S$ must be one of the base points $R_i$,  which it is not since
 $S \in \Proj \overline{R[It]}$. 

We conclude that   $f:X_\Gamma \to \Proj \overline{R[It]}$ is 
biregular at $S$ for every  local ring $S \in \Proj \overline{R[It]}$ that is infinitely near to $R$, and 
the following  are equivalent:
\begin{enumerate}
\item [(i)]   The morphism $f: X_\Gamma \to \Proj \overline{R[It]}$ is an isomorphism.

\item [(ii)]   Each
 local ring $S$ on $\Proj \overline{R[It]}$ with $\dim S \ge 2$  that dominates $R$ is an infinitely near point to $R$. 
\end{enumerate}
These equivalent conditions imply that 
$\Proj \overline{R[It]}$ is regular.  
\end{remark}

\begin{corollary}  \label{2.14}
Let   $(R,\m)$ be a regular local domain with $\dim R \ge 2$ and let 
 $I$ be a finitely supported  $\m$-primary ideal.   If each local ring $S \in  \Proj \overline{R[It]}$
 is a unique factorization domain, then $\Proj \overline{R[It]}$ is regular and $I$ has a 
saturated factorization.  
\end{corollary}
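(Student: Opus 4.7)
The plan is to deduce the corollary from Theorem~\ref{2.13} combined with the equivalence recorded at the end of Remark~\ref{1.35}. Concretely, I would verify condition~(ii) of that equivalence under the UFD hypothesis, and then let the remark do the rest.

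First I would dispose of the local rings that do not dominate $R$. Since $I$ is $\m$-primary, the projective morphism $\Proj \overline{R[It]} \to \Spec R$ is an isomorphism over $\Spec R \setminus \{\m\}$, so any local ring $S \in \Proj \overline{R[It]}$ that does not dominate $R$ is a localization of the regular ring $R$ and is in particular regular. Thus both the regularity of $\Proj \overline{R[It]}$ and the biregularity of the morphism $f: X_\Gamma \to \Proj \overline{R[It]}$ need only be verified at local rings that dominate $R$.

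Next, for each local ring $S \in \Proj \overline{R[It]}$ that dominates $R$, I would apply Theorem~\ref{2.13}: the UFD hypothesis forces $S$ to be regular, and moreover either $S = \ord_{R_i}$ for some base point $R_i \in \Gamma$ (so $\dim S = 1$) or $S$ is an infinitely near point to $R$, in which case $\dim S \ge 2$ by the definition of infinitely near point. Consequently every $S \in \Proj \overline{R[It]}$ with $\dim S \ge 2$ that dominates $R$ is infinitely near to $R$, which is exactly condition~(ii) of the equivalence at the end of Remark~\ref{1.35}.

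Finally, invoking the equivalence (i)~$\Longleftrightarrow$~(ii) of that remark yields that $f: X_\Gamma \to \Proj \overline{R[It]}$ is an isomorphism; hence $\Proj \overline{R[It]}$ inherits regularity from the regular model $X_\Gamma$, and $I$ has a saturated factorization by Definition-Remark~\ref{blowup}. The main substantive content of the corollary is already in Theorem~\ref{2.13}; once that is in hand, the present result is essentially a bookkeeping consequence of the equivalence recorded in Remark~\ref{1.35}, and the only step that requires any thought is the reduction to local rings that dominate $R$.
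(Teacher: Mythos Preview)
Your proof is correct and follows essentially the same route as the paper: apply Theorem~\ref{2.13} to every local ring dominating $R$, verify condition~(ii) of the equivalence in Remark~\ref{1.35}, and conclude that $f$ is an isomorphism. Your version is simply more explicit, spelling out the reduction to local rings that dominate $R$ and the identification of condition~(ii), whereas the paper compresses these into two sentences.
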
  

\begin{proof}  
By Theorem~\ref{2.13} every local ring $S \in \Proj \overline{R[It]}$ that dominates $R$ is either $\ord_{R_i}$ for a
base point $R_i$ of $I$ or is an infinitely near point to $R$.    By Remarks~\ref{1.35}, the morphism
$f: X_\Gamma \to \Proj \overline{R[It]}$ is an isomorphism. Thus $I$  has a saturated factorization.
\end{proof}

Theorem~\ref{2.13} and Discussion~\ref{1.32}  also imply the following.

\begin{corollary}  \label{2.15}
Let   $(R,\m)$ be a regular local domain with $\dim R \ge 2$ and let 
 $I$ be a finitely supported  $\m$-primary ideal.  
  \begin{enumerate}
\item
 If $R_i \in \mathcal{BP} (I)$ is such that  
  $\ord_{R_i}   \not\in  \Rees I$, 
  then for each regular local ring $T$ on $X_{\Gamma}$ such that $T \subseteq \ord_{R_i}$, the local ring on $\Proj \overline{R[It]}$ dominated by $T$ is not a UFD.
  Thus the local rings on $\Proj \overline{R[It]}$ dominated by $\ord_{R_i}$ or dominated by an infinitely near point in the first neighborhood of $R_i$ on $X_{\Gamma}$ are not UFDs.
\item
  If $R_i \in \mathcal{BP} (I)$ is such that $ord_{R_i} \not\in \Rees I$,
  and if $S \in \Proj \overline{R [I t]}$ is such that $S \subset \ord_{R_i}$, then $S$ is singular.
\item 
If $\Proj \overline{R [I t]}$ is regular,  then $\ord_{R_i} \in \Rees I$  for 
each $R_i \in \mathcal{BP}(I)$.
\end{enumerate} 
 \end{corollary}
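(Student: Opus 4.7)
The plan is to prove (1) by contradiction via Theorem~\ref{2.13} and Discussion~\ref{1.32}, then deduce (2) by applying (1) in the case $T=S$, and finally obtain (3) from (2) by taking $S$ to be the center of $\ord_{R_i}$ on $\Proj \overline{R[It]}$.

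For (1), suppose that $S$ were a UFD. By Theorem~\ref{2.13}, $S$ is regular and either $S=\ord_{R_j}$ for some base point $R_j$ of $I$, or $S$ is an infinitely near point to $R$. In the first case, Discussion~\ref{1.32}(2) gives $\ord_{R_j}\in \Rees I$; the chain $\ord_{R_j}\subseteq T\subseteq \ord_{R_i}$ of DVRs in a common fraction field then forces $\ord_{R_j}=\ord_{R_i}$, contradicting $\ord_{R_i}\notin \Rees I$. In the second case, Remark~\ref{1.35} asserts that $f$ is biregular at $S$, so $T=S$. Because $\ord_{R_i}$ is a codimension-one generization of $T$ on the scheme $X_\Gamma$, we have $\ord_{R_i}=T_{(\pi)}$ for some irreducible $\pi$ in the UFD $T$. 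Let $S':=f(\ord_{R_i})$ denote the center of $\ord_{R_i}$ on $\Proj \overline{R[It]}$; since $\ord_{R_i}\notin \Rees I$, Discussion~\ref{1.32}(2) yields $S'\subsetneq \ord_{R_i}$, and $S'$ is a generization of $S=T$, so $S'=T_Q$ for some prime $Q$ of $T$. The inclusion $T_Q\subseteq T_{(\pi)}$ gives $(\pi)\subseteq Q$, while the domination $QT_Q\subseteq \pi T_{(\pi)}$ combined with the UFD property of $T$ forces $Q\subseteq (\pi)$. Thus $Q=(\pi)$ and $S'=T_{(\pi)}=\ord_{R_i}$, again contradicting $\ord_{R_i}\notin \Proj \overline{R[It]}$.

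The ``thus'' conclusion of (1) follows since both $T=\ord_{R_i}$ and any local quadratic transform of $R_i$ on $X_\Gamma$ satisfy $T\subseteq \ord_{R_i}$ (the latter because the strict transform of the exceptional divisor from $R_i$ is the height-one prime yielding $\ord_{R_i}$ as a localization). For (2), if $S\subset \ord_{R_i}$ is regular, the two cases of Theorem~\ref{2.13} apply: the DVR case collapses to $\ord_{R_i}\in \Rees I$ as above, and the infinitely near case uses Remark~\ref{1.35} to place $S$ in $X_\Gamma$ and then invokes (1) with $T=S$ to reach a contradiction. For (3), if some $\ord_{R_i}\notin \Rees I$, the center of $\ord_{R_i}$ on $\Proj \overline{R[It]}$ is a local ring $S\subsetneq \ord_{R_i}$, which by (2) must be singular; this contradicts the hypothesis that $\Proj \overline{R[It]}$ is regular.

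The principal obstacle is the analysis in Case B of (1): one must first translate the scheme-theoretic generization $T\subseteq \ord_{R_i}$ on $X_\Gamma$ into the localization $\ord_{R_i}=T_{(\pi)}$ at a height-one prime of $T$, and then apply the UFD property of $T$ twice to collapse the generization $S'=T_Q$ to exactly $\ord_{R_i}$---once to identify the principal height-one prime $(\pi)$, and once via divisibility in the UFD to force $Q\subseteq (\pi)$. The remaining steps are routine once Theorem~\ref{2.13}, Remark~\ref{1.35}, and Discussion~\ref{1.32}(2) are applied in order.
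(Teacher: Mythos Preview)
Your proof is correct, but your argument for Item~1 takes a more laborious route than the paper's. The paper first isolates the special case $T=\ord_{R_i}$: the local ring $S'$ on $\Proj \overline{R[It]}$ dominated by $\ord_{R_i}$ cannot be a UFD, since Theorem~\ref{2.13} would force $S'=\ord_{R_j}\in\Rees I$ (impossible, as a DVR dominated by $\ord_{R_i}$ equals $\ord_{R_i}$) or $S'$ an infinitely near point (impossible, as $f$ biregular at $S'$ would give $S'=\ord_{R_i}$, a DVR). Then for general $T\subseteq \ord_{R_i}$ with image $S$, the key observation is simply that $S\subseteq T\subseteq \ord_{R_i}$ makes $S'$ a \emph{localization} of $S$; since a localization of a UFD is a UFD and $S'$ is not one, $S$ is not a UFD. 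Items~2 and~3 then follow immediately by the same localization trick.

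Your Case~B instead assumes $S$ is a UFD, uses biregularity to get $T=S$, and then carries out an explicit two-step UFD computation to force $S'=T_Q=T_{(\pi)}=\ord_{R_i}$. This is valid, but it is effectively rederiving, in one particular instance, the implication ``$S$ UFD $\Rightarrow$ $S'$ UFD'' that the paper invokes in one line. The paper's decomposition (special case $+$ localization) is shorter and conceptually cleaner; your direct attack has the minor virtue of never needing to say ``localization of a UFD is a UFD'' as a separate principle, but at the cost of the generization/divisibility bookkeeping you flagged as the principal obstacle. For Items~2 and~3 your reductions are essentially the same as the paper's.
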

 
\begin{proof}
The statement about $\ord_{R_i}$ in Item~1 follows directly from Theorem~\ref{2.13}.
For a local ring $T$ on $X_{\Gamma}$ such that $T \subset \ord_{R_i}$, let $S$ denote the local ring on $\Proj \overline{R[I t]}$ dominated by $T$.
The localization of $S$ at the center of $\ord_{R_i}$ is equal to the local ring on $\Proj \overline{R[I t]}$ dominated by $\ord_{R_i}$.
Thus a localization of $S$ is not a UFD, so $S$ is not a UFD.

Items 2 and 3 follow directly from Item 1.
\end{proof}

\section{Torsion divisor class group on normalized blowups} \label{sec3}

Let   $(R,\m)$ be a regular local domain with $\dim R \ge 2$ and let 
 $I$ be a finitely supported  $\m$-primary ideal. Let 
$S$  be a local ring on   $ \Proj \overline{R[It]}$.  In view of Theorem~\ref{2.13} it is natural to ask if
$S$ having torsion divisor class group  implies $ S$ is  regular.  This fails in general as we demonstrate
in Example~\ref{exam two bps}.   With additional assumptions  about the Rees valuation rings of $I$,  we show 
in Theorem~\ref{3.10} that if $S \in \Proj \overline{R[It]}$  has torsion divisor class group, then $S$ is 
regular. We use terminology as in Definition~\ref{3.09}

\begin{definition}  \label{3.09}
Let $A$ be an integral domain and let $B$ be an overring of $A$ with the same field of factions. 
The overring $B$ is a {\it sublocalization} of $A$ if $B$ is an intersection of localizations of $A$. 
Thus $B$ is a sublocalization of $A$ if and only if there exists a family 
$\{S_\lambda\}_{\lambda \in \Lambda}$ of multiplicatively closed subsets of nonzero elements of $A$ 
such that $B = \cap_{\lambda \in \Lambda}A_{S_\lambda}$.  It is well known that a sublocalization 
$B$ of $A$ is an intersection of localizations of $A$ at prime ideals.  Indeed, for a family 
$\{S_\lambda\}_{\lambda \in \Lambda}$ of multiplicatively closed subsets of nonzero elements of 
$A$, we have 
$$ \bigcap_{\lambda \in \Lambda}A_{S_\lambda} ~= ~
 \bigcap\{A_P ~|~ P \in \Spec A \text{ and } P \cap S_\lambda = \emptyset \text{ for some } \lambda \in \Lambda\}.
 $$
\end{definition}  

\begin{theorem} \label{3.10}
Let   $(R,\m)$ be a regular local domain with $\dim R \ge 2$ and let 
 $I$ be a finitely supported  $\m$-primary ideal. Let $\Gamma := \mathcal{BP}(I)$  denote
 the set of base points of $I$.
Let $Y$ be a normal complete model over $\Spec R$ that makes the following diagram commute, where $f: X_{\Gamma} \rightarrow \Proj \overline{R [I t]}$ is as in Diagram~\ref{1.1}:
\begin{equation*}
\xymatrix{
  X_{\Gamma}   \ar[rr]^{f}    \ar[dr]^{g} & & \Proj \overline{R[It]} \\
 & Y \ar[ur]^{h} &
}
\end{equation*}

If $ \ord_{R_i} \in \Rees I$ for each  $R_i \in \Gamma$, then we have
    \begin{enumerate}
 \item For each  local domain $S \in Y$ and 
 each $T \in g^{-1}(S)$, the ring $T$ is a sublocalization over $S$.
 \item   The morphism $g : X_{\Gamma} \rightarrow Y$
 is biregular at  each  $S \in Y$    that 
 has torsion divisor class group. 
 \item If $S \in Y$ is not regular, then the divisor class group of $S$ is not a torsion group.
 \end{enumerate}
\end{theorem}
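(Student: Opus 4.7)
My plan is to prove Item~(1) in detail and then obtain Items~(2) and (3) as short consequences, using torsion of $\Cl(S)$ to rule out any missing height-one primes. The hypothesis $\ord_{R_i} \in \Rees I$ enters only at the outset, in a preliminary step matching the DVRs of the three models.

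First I would check that every DVR on $X_\Gamma$ is also a DVR on $Y$. By Discussion~\ref{1.32}, the DVRs in $X_\Gamma$ dominating $R$ are exactly $\{\ord_{R_i} : R_i \in \Gamma\}$; by hypothesis each belongs to $\Rees I \subseteq \Proj \overline{R[It]}$, and because the only subring of the fraction field $K$ strictly containing a DVR is $K$ itself, the preimage under $h : Y \to \Proj \overline{R[It]}$ of such a DVR $W$ can only be $W$, so $W \in Y$. The remaining DVRs on $X_\Gamma$ are the $R_\mathfrak p$ for height-one primes $\mathfrak p$ of $R$, which lie on every birational normal model over $\Spec R$. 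For Item~(1), fix $T \in g^{-1}(S)$; since $T$ is regular, $T = \bigcap_P T_P$ over its height-one primes $P$, and each $T_P$ is a DVR on $X_\Gamma$, hence on $Y$ by the previous step. Choose an affine open $\Spec A \subseteq Y$ containing $S$ and write $S = A_{\mathfrak n}$ with $\mathfrak n = \m_S \cap A$. From $A \subseteq S \subseteq T_P$ and the normality of $A$, the center $\mathfrak p_P := \m_{T_P} \cap A$ has height one, $A_{\mathfrak p_P}$ is a DVR, and this DVR equals $T_P$; the fact that $A \setminus \mathfrak n$ consists of units in $T_P$ forces $\mathfrak p_P \subseteq \mathfrak n$, so $T_P = S_{\mathfrak q_P}$ for the height-one prime $\mathfrak q_P = \mathfrak p_P S$ of $S$. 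Hence $T = \bigcap_P S_{\mathfrak q_P}$ is a sublocalization of $S$ over height-one primes.

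For Items~(2) and (3), assume $\Cl(S)$ is torsion and let $T \in g^{-1}(S)$, so $T = \bigcap_{\mathfrak q \in A} S_{\mathfrak q}$ with $A$ a set of height-one primes of $S$. Since $S$ is normal Noetherian, $S = \bigcap_{\mathfrak q \in \Sigma} S_{\mathfrak q}$ over the set $\Sigma$ of all height-one primes, so if $T \neq S$ I may pick $\mathfrak q_0 \in \Sigma \setminus A$. Torsion of $\Cl(S)$ produces an integer $n \geq 1$ and an element $f \in S$ with $\mathfrak q_0^{(n)} = fS$; then $f$ is a unit in $S_{\mathfrak q}$ for every height-one prime $\mathfrak q \neq \mathfrak q_0$, hence in $T$. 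But $f \in \mathfrak q_0 \subseteq \m_S \subseteq \m_T$ by domination, a contradiction. This proves Item~(2). Item~(3) is its contrapositive: if $S$ is not regular then $S \notin X_\Gamma$, and surjectivity of the proper birational morphism $g$ makes $g^{-1}(S)$ nonempty yet distinct from $\{S\}$, forcing $\Cl(S)$ to be non-torsion.

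The hard part will be the affine-patch identification $T_P = S_{\mathfrak q_P}$ in Item~(1): it depends on knowing $T_P \in Y$ (the only use of the hypothesis $\ord_{R_i} \in \Rees I$) and on using the normality of an affine piece of $Y$ to upgrade the containment $A \subseteq T_P$ to the equality $T_P = A_{\mathfrak p_P}$. Once (1) is in place, the torsion-$\Cl$ argument is a short prime-avoidance-style calculation that handles uniformly both the one-dimensional case (where it reduces to irredundance of $Y$) and the higher-dimensional case.
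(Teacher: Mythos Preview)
Your proof is correct and follows essentially the same route as the paper's. For Item~(1), both arguments show that each height-one localization $T_P$ of $T$ is already a height-one localization of $S$; the paper does this by passing to $A = h(S) \in \Proj \overline{R[It]}$ and using that $\ord_{R_i} \in \Rees I$ forces $A_{\m_{V_i} \cap A} = V_i$, while you achieve the same end by first observing that every DVR on $X_\Gamma$ lies on $Y$ and then invoking irredundance of the model $Y$ on an affine chart. For Item~(2), the paper simply cites \cite[Cor.~2.9]{HR} (torsion $\Cl(S)$ forces every sublocalization to be a localization), whereas you give a short self-contained argument with the element $f$ generating $\q_0^{(n)}$; your version is a direct proof of that citation in the case at hand. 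Item~(3) is handled identically as the contrapositive. These are presentational differences only; the mathematical content is the same.
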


\begin{proof}
Let $T \in g^{-1}(S)$ and let $A = h (S)$ be the local ring on $\Proj \overline{R [I t]}$ dominated by $S$.
We have injective birational local homomorphisms $A \hookrightarrow S \hookrightarrow T$
of normal Noetherian local  domains. 
We prove that $T$ is a sublocalization of $S$.  Since $S$ and $T$ are normal Noetherian 
domains,  it suffices to show that  
$T_{\p} = S_{ \p \cap S }$   for each  height one prime $\p$ of $T$.
By construction of $X_{\Gamma}$, either $T_{\p} = R_{\p \cap R}$ or $T_{\p} = V_i$
 for some $V_i = \ord_{R_i}$, where $R_i \in \Gamma$. 
In the case where $T_{\p} = R_{\p \cap R}$, it follows that $T_{\p} = S_{\p \cap S}$.
In the case where $T_{\p} = V_i$, let $\m_{V_i}$ denote the maximal ideal of $V_i$.
Since $V_i$ is a Rees valuation ring of $I$, it follows that $A_{\m_{V_i} \cap A} = V_i$.
Thus $V_i = A_{\m_{V_i} \cap A} \subseteq S_{\m_{V_i} \cap S} \subseteq T_{\p} = V_i$.
Noting that $\m_{V_i} \cap S = \p \cap S$, it follows that $\p \cap S$ is a height $1$ prime of $S$. 
Therefore $T$ is a sublocalization of $S$.  This  proves item~1.  
 
 If $S$ has torsion divisor class group, then every sublocalization of $S$ is a localization 
 of $S$, cf. \cite[Cor.~2.9]{HR}.   Since
 $S$ and $T$ are local and $S \hookrightarrow T$ is a local homomorphism,
 if $T$ is a localization of $S$, then $S = T$.  This proves item~2.  

Item~3 is the contrapositive of Item~2.
\end{proof}

\begin{corollary}
Assume the notation of Theorem~\ref{3.10}.
If $S \in \Proj \overline{R [I t]}$ is contained in $\ord_{R_i}$ for at most one $i$, then $f$ is biregular at $S$.
\end{corollary}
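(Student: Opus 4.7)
The plan is to deduce the corollary from Theorem~\ref{3.10}(2) by showing that the divisor class group $\Cl(S)$ is a torsion group. First I would translate the hypothesis geometrically. Under the standing hypothesis of Theorem~\ref{3.10} that $\ord_{R_i}\in\Rees I$ for each $R_i\in\Gamma$, the exceptional divisors of $\Proj\overline{R[It]}\to\Spec R$ are in bijection with the $R_i\in\Gamma$: let $E_i$ denote the exceptional divisor whose generic valuation is $\ord_{R_i}$. The containment $S\subseteq\ord_{R_i}$ says exactly that the point of $\Proj\overline{R[It]}$ corresponding to $S$ is a specialization of the generic point of $E_i$, i.e.\ lies on $E_i$. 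So by hypothesis the point of $S$ lies on at most one $E_j$.

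Next I would compute $\Cl(\Proj\overline{R[It]})$. Because $R$ is a regular local ring with $\dim R\ge 2$, hence a UFD whose punctured spectrum has trivial class group, and because $I$ is $\m$-primary so that the complement of the exceptional locus in $\Proj\overline{R[It]}$ is $\Spec R\setminus\{\m\}$, the excision sequence for Weil divisors
\[
\bigoplus_{i} \Z[E_i] \longrightarrow \Cl(\Proj\overline{R[It]}) \longrightarrow \Cl(\Spec R\setminus\{\m\}) = 0
\]
shows that $\Cl(\Proj\overline{R[It]})$ is generated by the classes $[E_i]$. Passing to the stalk at $S$ gives a surjection $\Cl(\Proj\overline{R[It]})\to \Cl(S)$ whose kernel is generated by classes of divisors whose support misses the point of $S$. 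Under the translated hypothesis only $E_j$ survives this quotient, so $\Cl(S)$ is cyclic, generated by the image of $[E_j]$ (or trivial).

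Finally I would show this image is torsion using that $IS=aS$ is principal in $S$. By the proof of Theorem~\ref{3.10}(1), every height one prime $\q$ of $S$ falls into one of two types: either $S_\q = R_{\q\cap R}$ (type (a)), in which case $IR_{\q\cap R}=R_{\q\cap R}$ since $I$ is $\m$-primary and $\m$ extends to the unit ideal in the localization of $R$ at any height one prime; or $S_\q=\ord_{R_i}$ (type (b)), which by the hypothesis forces $i=j$, with valuation $\ord_{R_j}(I)=m_j>0$. Therefore the divisor of $a$ on $\Spec S$ is supported at a single height one prime, namely the one corresponding to $E_j$, with multiplicity $m_j$, and the principality of $aS$ yields $m_j[E_j|_S]=0$ in $\Cl(S)$.

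Thus $\Cl(S)$ is cyclic of exponent dividing $m_j$, in particular torsion, and Theorem~\ref{3.10}(2) gives biregularity of $f$ at $S$. The main obstacle, apart from the geometric translation of the hypothesis, is the clean assembly of the class-group machinery on the normalized blowup: the excision sequence for Weil divisors, the stalk quotient description of $\Cl(S)$ as $\Cl(\Proj\overline{R[It]})$ modulo classes of divisors not through the point of $S$, and the identification of $E_i$ with $\ord_{R_i}$ via the Rees hypothesis. Each ingredient is standard, but each must be made precise in the (possibly singular) normalized blowup setting.
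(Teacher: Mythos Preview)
Your proof is correct and follows the same strategy as the paper: show that $\Cl(S)$ is torsion and then invoke Theorem~\ref{3.10}(2). The paper's argument is more direct, however, and avoids the global excision computation entirely. Rather than computing $\Cl(\Proj\overline{R[It]})$ and passing to the stalk, the paper works locally: since $IS = aS$ is principal and $a$ has a unique minimal prime $\p$ (the center of the one $\ord_{R_j}$ containing $S$), we have $aS = \p^{(n)}$; since $S[\tfrac{1}{a}]$ is a localization of $R$ (the blowup being an isomorphism off the exceptional locus) it is a UFD, so Nagata's theorem gives $\Cl(S)$ generated by $[\p]$, and the relation $n[\p]=0$ finishes. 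Your detour through the global class group and the stalk surjection reaches the same conclusion but introduces machinery (the excision sequence on the possibly singular normalized blowup, the description of the kernel of $\Cl(X)\to\Cl(\mathcal{O}_{X,p})$) that the local Nagata argument bypasses. One small inaccuracy: the dichotomy for height-one primes you cite is established in the proof of Theorem~\ref{3.10}(1) for primes of $T\in X_\Gamma$, not of $S$; the analogous statement for $S$ follows instead from the fact that minimal primes of $IS$ correspond to Rees valuations containing $S$ (as in the Discussion following the corollary).
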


\begin{proof}
If $S$ is not contained in any $\ord_{R_i}$, then $S$ is a localization of $R$ and there is nothing to show, so assume $S$ is contained in $\ord_{R_i}$ for some fixed $i$.
It follows that $I S$ is principal, say $I S = a S$, and $a S$ has only one minimal prime $\p$, where $\p$ is the center of $\ord_{R_i}$.
Therefore $a S = \p^{(n)}$ for some positive integer $n$.
Since $S [\frac{1}{a}]$ is a localization of $R$, it is a UFD, so the divisor class group of $S$ is generated by the classes of minimal primes of $a S$.
Therefore the divisor class group of $S$ is torsion, so the claim follows from Theorem~\ref{3.10}.2.
\end{proof}

\begin{remark}   Lemma~\ref{2.40} can be used to give an alternative proof of item~2 of Theorem~\ref{3.10}
by an argument along the same lines as the proof given for Theorem~\ref{2.13}.
\end{remark}

\begin{discussion}
Let   $(R,\m)$ be a regular local domain with $\dim R \ge 2$ and let 
 $I$ be a finitely supported  $\m$-primary ideal.  
 Let $\Gamma := \mathcal{BP}(I)$  denote
 the set of base points of $I$ and  let $V_i := \ord_{R_i} $ for each  $R_i \in \Gamma$.
 In view of Theorem~\ref{3.10},  we are motivated to ask for conditions on $I$ that imply
 $\ord_{R_i}  \in \Rees I$ for each $R_i \in \Gamma$.   Lipman's unique factorization 
 theorem for finitely supported complete ideals implies that  $I$ has a factorization as a
 product of special $*$-simple complete ideals with possibly some negative exponents.  
 For each terminal base point $R_n$ the special $*$-simple ideal $P_{RR_n}$ must occur
 with a positive exponent.  Since $V_n \in \Rees P_{RR_N}$, it follows that 
 $V_n = \ord_{R_n} \in \Rees I$ for each 
 terminal base point $R_n$ of $I$. For each  $R_j \in \Gamma$,  the DVR 
 $V_j = \ord_{R_j}$   
 dominates a unique local domain $S_j \in \Proj \overline{R[It]}$.  Theorem~\ref{2.13} implies
 that   $S_j = V_j$      if  $S_j$ is a UFD.  Hence $V_j \in \Rees I$ in this case.

 In Example~\ref{samerees}, we 
 present an example where  $\ord_{R_i}  \in \Rees I$ for each $R_i \in \Gamma$
  and $\Proj \overline{R[It]}$ has
 precisely one singular point.  
 
 Fix a local domain  $S \in \Proj \overline{R[It]}$  that dominates $R$.   We observe the following:
 \begin{enumerate}
 \item 
 Since $I$ is $\m$-primary, the minimal primes $P$ of $IS$ are
 the same as the minimal primes of $\m S$.   Since $IS$ is principal,  each minimal prime $P$ of $IS$ has $\hgt P = 1$. 
 Since $S$ is normal, $S_P = V$ is a DVR and $V \in \Rees I$.  Thus the association of a minimal prime 
 $P$ of $IS$ or $\m S$  with the localization $S_P = V$ yields  a 
 one-to-one correspondence  between the minimal primes $P$ of $I$  and 
 the DVRs $V \in \Rees I$ such that $V$ contains $S$.  
 \item
 Let $f : X_{\Gamma} \rightarrow \Proj \overline{R [I t]}$  be as in Diagram~\ref{1.1}.  The morphism $f$ is either biregular at 
 $S$ or the fiber $f^{-1}(S)$ is infinite and contains both local domains $T$ with $\dim T = \dim S$ and local domains $T$
 with $\dim T < \dim S$.  To see that there exists $T \in f^{-1}(S)$ with $\dim T = \dim S$,  let 
 $(0) = \p_0 \subset \p_1 \subset \cdots \subset \m_S$  be a strictly ascending chain of prime ideals of $S$ of length equal 
 to $\dim S$.
  By \cite[(11.9)]{N}, there exists a valuation domain $W$ that has prime ideals lying over each prime ideal in 
 this chain. Let $T$ be the local ring on $X_\Gamma$ dominated by $W$.  Then $T \in f^{-1}(S)$ and 
 we have $\dim T \ge \dim S$ since $T$ contains a chain of prime ideals of length $\dim S$  that contract in $S$ to distinct 
 prime ideals.  Since $S$ is Noetherian, we also have $\dim T \le \dim S$, so $\dim T = \dim S$. 
 
 Assume that $S \ne T$ and let $I^*$ be an ideal in $R$ such that  $I^*$ has a saturated factorization and
  $\mathcal{BP}(I^*) = \mathcal{BP}(I)$.  
  Let $a \in I^*$ be such that $aT = I^*T$ and let $A := S[I^*/a]$.   We have
 $S \hookrightarrow A \hookrightarrow T$  and $T$ is a localization of $A$ at a maximal ideal 
 $P$,  where
 $P \cap S = \m_S$  is  the maximal ideal of $S$.   Since $R[I^*/a] \subset S[I^*/a]$, 
 we have $A_Q \in X_\Gamma$   for each $Q \in \Spec A$.   As in Remark~\ref{1.35},  
 the ideal $\m_ST$  is contained in a 
 nonmaximal prime ideal of $T$. Hence there exists a prime ideal $Q$ of $A$ such that $Q \cap S = \m_S$
 and $Q \subsetneq P$.  Thus the ring $A/\m_SA$  has positive Krull dimension,  and 
  is a finitely generated algebra over the residue field of $S$.  Therefore $\Spec (A/ {\m_S} A)$ is 
  infinite and hence the fiber $f^{-1}(S)$ is infinite.
  \item
 Since $S$ is  a normal local domain,  $S$ is the intersection of the valuation domains $W$ that birationally 
 dominate $S$, cf \cite[Prop. 1.1]{L}. Each of these valuation domains $W$  dominates a regular
 local domain $T \in f^{-1}(S)$.  It follows that  $S = \bigcap\{T ~|~ T \in f^{-1}(S) \}$.  
  \end{enumerate}
\end{discussion}

\section{Ideals that have a saturated factorization}   \label{sec4}

\begin{discussion}\label{same blowup}  Let $(R,\m)$ be a regular local ring and let  $I$ be a finitely supported complete  
$\m$-primary ideal.  Let $\mathcal{BP}(I)$ be the base points of $I$ and 
enumerate the base points as $R = R_0, R_1, \ldots, R_n$.  
For $i \in \{ 0,\dots, n \}$, let $P_i$ denote the special $*$-simple ideal of $R$ associated to the pair $R \prec R_i$. 
We consider the following properties the ideal $I$ may have.  Each of the enumerated  properties implies that 
$I$ has a saturated factorization, that is 
$\Proj \overline{R[It]}$ is regular and is equal  to $X_{\Gamma}$.
\begin{enumerate}
\item The product $P_0 *P_1* \cdots *P_n$ divides $I$  in the sense that there exists an ideal $J$ of $R$ such that 
$P_0 *P_1* \cdots *P_n*J = I$.  
\item 
For each  $R_i \in \mathcal{BP}(I)$,  the special star-simple ideal $P_i$ divides $I$.  
\item 
The product $P_0 *P_1* \cdots *P_n$ divides $I^k$, for some positive integer $k$.
\item 
For each $S \in \mathcal{BP}(I)$,   the complete transform 
$\overline{I^S}$  of $I$ in $S$ is
divisible by the maximal ideal $\m_S$ of $S$,  that is  $ \overline{I^S} = \m_S * J$ for some ideal $J \subset S$.
\item 
There exists a positive integer $k$ such that for   each $S \in \mathcal{BP}(I)$,   the complete transform 
$\overline{(I^k)^{S}}$  of $I^k$ in $S$ is
divisible by the maximal ideal $\m_S$ of $S$,  that is  $\overline{(I^k)^{S}}  = \m_S * J$ for some ideal $J \subset S$.
\end{enumerate}

It is straightforward  to see that $(1) \Rightarrow (2) \Rightarrow (3) \Rightarrow (5)$ and $(4) \Rightarrow (5)$.
Since the ideals $I$ and $I^k$ have the same normalized blowup, and 
since complete transforms and $*$-products commute,  Condition 5 implies
that $\Proj \overline{R[It]} = X_{\Gamma}$. 
\end{discussion}

 Example \ref{exam:twoBranches}   demonstrates  the existence of a 
finitely supported complete ideal   of a regular local domain  that satisfies 
Condition~2 but fails to satisfy Condition~1 of Discussion \ref{same blowup}.

\begin{example}\label{exam:twoBranches}
Let $R$ be a $3$-dimensional regular local ring with maximal ideal $\m = (x,y,z)R$. Consider the following infinitely near points $R_i$ of $R$:
\begin{equation*}
  R~ := ~R_0
    \begin{array}{cc}
      \prec^x~ R_1 & \prec^{xy} ~R_2\\
      \prec^z ~R_3 & \prec^{zy}~ R_4
    \end{array}.
\end{equation*}  
Thus $R_1$ and $R_3$ are in the first neighborhood of $R$ and $R_2$ and $R_4$ are in the second neighborhood of $R$.

The special $*$-simple ideals $P_i$ associated to the pairs $R \prec R_i$ are 
\begin{align*}
P_0 &= \m,\\
P_1 &= (x^2, ~ y,  ~z)R, \\
P_2 &= (x^3,~ x^2y,~ xz,~ y^2, ~ yz,~ z^2)R, \\
P_3 &= (z^2, ~x, ~y)R, \\
P_4 &= (z^3, ~z^2y, ~zx, ~  y^2,   ~ yx,  ~x^2)R.
\end{align*}

The product  $P_2 * P_4  = P_2P_4$  is divisible by $\m^2$ and  has a factorization $J * \m^2   = J\m^2$,   where 
$J :=  (x z,y^{2},z^{3},y z^{2},x^{2} y,x^{3})R$.  
By an argument similar to \cite[Example~4.18]{HKT}, the ideal $J$ is a $*$-simple ideal that is not a special $*$-simple ideal. The ideal 
$J$ has two Rees valuations,  $\Rees J = \{\ord_{R_2}, \ord_{R_4}\}$,  
the order valuations of $R_2$ and $R_4$.
Consider the ideal 
\begin{align*}
I &:= J * \m * P_1 * P_3    = J\m P_1P_3 \\
 &=  (x y z^{3},x^{2} z^{3},y^{3} z^{2},x y^{2} z^{2},x^{2} y z^{2},x^{3} z^{2},y^{4} z,x y^{3} z,x^{2} y^{2} z,x^{3} y z,y^{5},x y^{4},\\
 &\qquad x^{2}y^{3},y z^{5},x z^{5},y^{2} z^{4},x^{5} z,x^{4} y^{2},x^{5} y,z^{7},x^{7})R.
\end{align*}
Each of the  ideals   $P_2$ and $P_4$ divides  $I$,  so $I$ satisfies Condition~2 of 
Discussion~\ref{same blowup}.  
Since $\ord_R (P_1 * P_2 * P_3 * P_4)  = 6  >  \ord_R I = 5$, 
the $*$-product $P_1 * P_2 * P_3 * P_4$ does not divide $I$. 
Hence,   a fortiori,   $\m * P_1 * P_2 * P_3 * P_4$ does not divide $I$, 
 so the ideal $I$  does not satisfy Condition~1 of 
Discussion \ref{same blowup}.
\end{example}

In Example~\ref{7.10},  we examine singularities of the  $ *$-simple  monomial ideal $J$ 
of Example~\ref{exam:twoBranches}.

\section{Blowups of ideals with only two  base points}

We consider in this section the case where  a finitely supported ideal has two base points 
and no residue field extension.

\begin{setting}  \label{5.1}
Let $(R, \m)$ be a  regular local  domain  with $d = \dim R \ge 2$ and let $R_1$ be an infinitely near 
point to $R$ in the first neighborhood. Assume there is no residue field extension from $R$ to 
$R_1$.  By appropriately choosing a regular system of parameters for $R$,  we may assume that 

$\m = (x,~y_1,~\ldots,~y_{d-1})R$ \, and \, $R_1 = R[\frac{y_1}{x}, \ldots, \frac{y_{d-1}}{x}]_{(x, \frac{y_1}{x}, \ldots, \frac{y_{d-1}}{x})R[\frac{y_1}{x}, \ldots, \frac{y_{d-1}}{x}]}$.

The special $*$-simple ideal associated to $R$ as an infinitely near point to itself is the maximal ideal  $\m$ of $R$.
The special $*$-simple ideal associated to $R \prec R_1$ is
$P_1 ~ = ~(x^2, y_1, \ldots,  y_{d-1})R$.

\begin{comment}
\begin{enumerate}
\item  $\m~ = ~(x,~y)R$  and  \quad   $R_1~=~  R[\frac{y}{x}]_{(x, \frac{y}{x})R[\frac{y}{x}]}$,  \qquad   if $\dim R ~ =  ~2$.
\item  $\m~ = ~(x,~y, ~z)R$  and    $R_1~=~  
R[\frac{y}{x},  \frac{z}{x}]_{(x, \frac{y}{x}, \frac{z}{x} )R[\frac{y}{x},\frac{z}{x}]}$,    if $\dim R ~ =  ~3$.
\item
$\m~ = ~(x,~y, \ldots,z)R$  and    $R_1~=~  R[\frac{y}{x},\ldots,  \frac{z}{x}]_{(x, \frac{y}{x},\ldots, \frac{z}{x} )R[\frac{y}{x}, \ldots, \frac{z}{x}]}$,    if $\dim R ~ \ge  ~4$.
\end{enumerate}

The special $*$-simple ideal associated to $R$ as an infinitely near point to itself is  the maximal ideal  $\m$ of $R$.
The special $*$-simple ideal associated to $R \prec R_1$ is
\begin{enumerate}
\item
 $P_1~ =   ~(x^2, y)R$,  if $\dim R ~=~ 2$,
 \item
 $P_1 ~ = ~(x^2, y, z)R$,  if $\dim R~ = ~3$,
 \item 
 $P_1 ~ = ~(x^2, y, \ldots,  z)R$,  if $\dim R ~\ge~  4$.
 \end{enumerate} 
\end{comment}

\end{setting}

\begin{discussion}
With notation as in Setting~\ref{5.1}, let $\Gamma := \{R_0, R_1\}$.  
For finitely supported ideals $I$ with $ \mathcal{BP}(I) = \Gamma$, we
observe that there are precisely two possibilities for the model 
$\Proj \overline{R[It]}$.  By   \cite[Theorem~5.4]{HKT} and 
the unique factorization theorem of Lipman  \cite[Theorem~2.5]{L},   the complete ideals $I$ 
such that $\mathcal{BP}(I) =  \{R_0, \, R_1\}$  have the form 
$I = \m^i * P_{1}^j$, where $i$ is a nonnegative 
integer and $j$ is a positive integer. 
\begin{enumerate}
\item
Assume in the  factorization $I = \m^i * P_{1}^j$ that 
the integer $i$ is positive.   Then   $I$ has a saturated factorization and  $\Proj R[It] = X_{\Gamma}$.  
We may take $i = j = 1$.  
The model $X_\Gamma$ has infinitely near points to $R$ in the first   and  second  neighborhoods.  
\item
The ideals $P_{1}^j$ for $j$ a positive integer all have the same blowup.
Thus one may assume $j = 1$.  We examine the model $\Proj R[P_1t]$   in Example~\ref{exam two bps}.  
\end{enumerate}

\end{discussion}

\begin{example}\label{exam two bps}
Assume notation as in  Setting~\ref{5.1}.  The order valuation domain $\ord_R$ is not in $\Rees P_1$.
  By  Corollary~\ref{2.15},  the local ring $S \in \Proj R[P_1t]$   dominated by 
$\ord_R$ is not a UFD.  We show the following.

\noindent
{\bf Fact.}  The morphism $f: X_{\Gamma} \to \Proj R[P_1t]$ as in  Diagram~\ref{eq1.1} is biregular at the local rings 
$T$ on $\Proj R[P_1 t]$ such that 
$S$  is not a localization  of $T$, i.e. such that $T \not\subset \ord_R$.   
Using the language of schemes, let $\mathfrak{p} \in \Proj R[ P_1 t]$ be the point corresponding to the local 
domain 
$S$,  that is, $\SO_{\Proj R[ P_1 t], \mathfrak{p} }  = S$. Then $f$ induces an isomorphism on the open sets   
  $$X_\Gamma ~\setminus  ~f^{-1} ( \overline{\{\mathfrak p\}} ) ~\to ~
  \Proj R[P_1 t] ~ \setminus ~ \overline{\{\mathfrak p\}},
 $$
 where $\overline { \{ \mathfrak{p} \} }$ denotes the Zariski closure of the point $\mathfrak{p}$. 

To establish the fact stated above,  let 
$T \in \Proj R[P_1 t]$ be a local ring birationally dominating $R$ such that $T \not\subset \ord_{R}$.
We show that $f$ is biregular at $T$.
Let $\q$ denote the center of $\ord_{R_1}$ on $T$ and let $P_1 T = a T = \q^{(2)}$.
For each height $1$ prime ideal $\p$ of $T$,
we have $T_{\p}$ is either $R_{\p \cap R}$ or $\ord_{R_1}$.
Since $T [\frac{1}{a}]$ is a Noetherian normal domain and $\q$ is the unique minimal prime ideal of $a T$, the ring $T [\frac{1}{a}]$ is a sublocalization of $R$.
Since $R$ is a UFD, it follows that $T [\frac{1}{a}]$ is also a localization of $R$ (\cite[Cor 2.9]{HR}).
Thus $T [\frac{1}{a}]$ is a UFD and the divisor class group of $T$ is generated by the divisor class of $\q$.
Since $\q^{(2)} = a T$ is principal, it follows that the divisor class group    of $T$ is  a torsion group.
By Lemma~\ref{2.40}, $\m T$ is principal.  Since $\Proj R[\m P_1] = X_\Gamma$,  it follows that 
$T$ is on $X_{\Gamma}$, so $T$ is regular and $f$ is biregular at $T$.

We conclude  that the fiber with respect to $f: X_\Gamma \to \Proj R[P_1t]$
 of the singular locus of $\Proj R [P_1 t]$ consists of the rings $T$ on $X_{\Gamma}$ such that $T \subset \ord_{R}$.
In particular, every point in the first neighborhood of $R$ except $R_1$ is in the fiber of the singular locus of $\Proj R [P_1 t]$.
To see this, let $\mathfrak{q}$ denote the point corresponding to $\ord_{R}$ in $\Proj R[P_1 t]$.
Then $f^{-1} (\overline{ \{ \mathfrak{p} \} }) = \overline{ \{ \mathfrak{q} \} }$.

In the case where $\dim R = 2$,   the local domain  $S$ is the unique singular point  of $\Proj R[P_1t]$.  
The fiber  $f^{-1}(S)$ consists of the infinitely near points in  the first neighborhood of $R$ other than $R_1$
 and the point $R [\frac{x^2}{y}, \frac{y}{x}]_{(\frac{x^2}{y}, \frac{y}{x}) R [\frac{x^2}{y}, \frac{y}{x}]}$.
 Notice that $R [\frac{x^2}{y}, \frac{y}{x}]_{(\frac{x^2}{y}, \frac{y}{x}) R [\frac{x^2}{y}, \frac{y}{x}]}$  is 
 the unique point in the first neighborhood of $R_1$ that is contained in $\ord_{R_0}$.
 In classical terminology, this point is said to be proximate to $R_0$.
 
 In the case where $\dim R   = n \ge 3$,  the local domain $S$ is no longer the unique singular point of $\Proj R[P_1t]$.
 We have $\dim S = 2$,  and the singular locus of $\Proj R[P_1t]$ is  of dimension $n - 2$. 
\end{example}

In Example~\ref{exam two bps},  the powers of the maximal ideal of the local domain 
$S$ define a valuation ring $\ord_S$  and $\ord_S = \ord_{R_0}$.
This motivates us to ask:

\begin{question}\label{5.4}
Let $(R, \m)$ be a  regular local ring   with $\dim R \ge 3$   and let $I$ be a finitely supported 
$\m$-primary ideal of $R$.
Let $R'$ be a base point of $I$ such that $V = \ord_{R'}$ is not a Rees valuation ring of $I$.
Let $(S, \m_S)$ denote the ring birationally dominated by $V$ on $\Proj \overline{R [ I t ]}$.  If  the powers of 
$\m_S$ define a valuation  ring  $\ord_S$,  does it  follow that $\ord_S = V$? 
\end{question}

\begin{remark}
Let  $R \hookrightarrow S$ be an injective  extension of regular local domains  with $\dim R = \dim S$ and $S$ birationally dominating $R$.
If $\ord_{R} = \ord_{S}$, then  it  follows from \cite[Cor.~2.6]{Sally} that $R = S$.
\end{remark}

Proposition~\ref{5.6} answers Question~\ref{5.4} in the case where $V = \ord_R$.

\begin{proposition}\label{5.6}
Let $R$ be a Noetherian  local domain such that the powers of its maximal ideal $\m_R$ define a 
valuation.  Let $V = \ord_R$ denote the associated valuation domain.
Let $S$ be a local domain birationally dominating $R$ such that  $V$ dominates $S$.
If the powers of the maximal ideal $\m_S$ of $S$ define a valuation,  then $V$ is the order
valuation ring $\ord_S$. 
\end{proposition}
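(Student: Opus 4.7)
The plan is to show directly that the two $\Z$-valued valuations $v := \ord_R$ and $w := \ord_S$ on the common fraction field $K$ of $R$ and $S$ coincide; since a valuation ring on $K$ is determined by its valuation (up to equivalence), this immediately gives $V = V_v = V_w = \ord_S$.

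The first step is to extract two inequalities from the two domination hypotheses. Because $V$ dominates $S$, we have $\m_S \subseteq \m_V$, and since $V$ is a DVR whose valuation takes the value $1$ on any uniformizer, this gives $v(x) \geq 1$ for every $x \in \m_S$. Multiplicativity of $v$ then yields $v(x) \geq n$ whenever $x \in \m_S^n$, so in particular $v(x) \geq w(x)$ for every $x \in S$. Conversely, since $S$ birationally dominates $R$, the containment $\m_R = R \cap \m_S \subseteq \m_S$ gives $\m_R^n \subseteq \m_S^n$ for every $n$; hence for $a \in R$ with $v(a) = n$ we have $a \in \m_R^n \subseteq \m_S^n$, so $w(a) \geq n = v(a)$.

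Combining these inequalities on $R$, we obtain $v(a) = w(a)$ for every nonzero $a \in R$. Since $R$ and $S$ share the fraction field $K$, every $x \in K^{\times}$ can be written as $b/c$ with $b,c \in R$, and then
\[
v(x) \;=\; v(b) - v(c) \;=\; w(b) - w(c) \;=\; w(x).
\]
Thus $v$ and $w$ agree on all of $K^{\times}$, so $V = W$, i.e., $V$ is the order valuation ring $\ord_S$, as claimed.

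I do not expect a serious obstacle: the argument amounts to a careful bookkeeping of the chain of dominations $\m_R \subseteq \m_S \subseteq \m_V$ together with the fact that a valuation on $K$ is determined by its values on $R \setminus \{0\}$. The only point that requires care is the correct use of the hypothesis that the powers of $\m_R$ (respectively $\m_S$) define a valuation: this is what allows one to identify $v(a)$ with $\sup\{n : a \in \m_R^n\}$ for $a \in R$, and analogously for $w$ on $S$, which is precisely what makes the two inequalities above comparable.
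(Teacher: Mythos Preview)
Your proof is correct and follows essentially the same approach as the paper's: establish the two inequalities $\ord_R(a) \le \ord_S(a)$ (from $S$ dominating $R$) and $\ord_S(a) \le \ord_R(a)$ (from $V$ dominating $S$) for $a \in R$, then extend to the common fraction field. Your version simply spells out in more detail why each domination yields the corresponding inequality on powers of the maximal ideals.
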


\begin{proof}
Let $a \in R$.
Since $S$ dominates $R$, $\ord_{R} a \le \ord_{S} a$, and since $V$ dominates $S$, we have $\ord_{S} a \le \ord_{R} a$, so $\ord_{R} a = \ord_{S} a$.
Thus $\ord_{R} = \ord_{S}$ on their common field of fractions, so $V$ is the order valuation ring $\ord_{S}$.
\end{proof}

\section{Finitely supported ideals having  the  same Rees valuations}

The examples   we  present in this   section have  3 base points 
with the base points linearly ordered.   We describe  the blowups of all the complete ideals having 
precisely these 3 points as 
base points.

Steven Dale Cutkosky remarks  in \cite{C1}  that a birational morphism between 2-dimensional normal schemes that is an
isomorphism in codimension one must be an isomorphism by Zariski's main theorem.   In Example~2 on 
page~37 of \cite{C1},  Cutkosky presents an example of an infinite set of normal ideals in a 3-dimensional 
regular local ring that have the same Rees valuations, but   have the property 
that the blowups of the ideals are pairwise distinct.
In Example~\ref{samerees},  we present an example of
normal ideals  $J \subset I$   of a 3-dimensional regular local ring  $R$
that have the same Rees valuations, 
the ideal $J$ is a multiple of $I$ and 
$\Proj R[Jt] = X_{\Gamma}$ is regular while  $\Proj R[It]$ has one singular point.

\begin{setting}  \label{6.10}
Let $(R, \m)$ be a  regular local domain with $d = \dim R \ge 2$.  
%Using notation as in Setting~\ref{5.1},
Let $\m = (x, y)R$ if $d = 2$ and $\m = (x, y, z_1, \ldots, z_{d-2})$ if $d \ge 3$ (and if $d = 3$, denote $z = z_1$).
%,  or $(x,y, z)R$, or $(x,y, \ldots, z)R$, depending on whether  $\dim R = 2$, 
%or $\dim R = 3$, or $\dim R \ge 4$, respectively.  
Consider the following chain of local quadratic transforms
$$
R  ~~ := R_0 ~  \prec^{x} ~ R_1 ~ \prec^{yx} ~ R_2,
$$
where $R_1$ with maximal ideal $\m_1$ is as in Setting~\ref{5.1}. Thus \\
\noindent
$R_1 = R[\frac{y}{x}]_{(x, \frac{y}{x})R[\frac{y}{x}]}$  \quad   and \quad $\m_1 =  (x, \frac{y}{x}) R_1$
 \quad   if $d = 2$,\\
 \medskip
\noindent
%$R_1 = R[\frac{y}{x},  \frac{z}{x}]_{(x, \frac{y}{x}, \frac{z}{x})R}$  \quad and \quad
% $\m_1 =  (x, \frac{y}{x}, \frac{z}{x})R[\frac{y}{x}, \frac{z}{x},]_{(x, \frac{y}{x}, \frac{z}{x})R[\frac{y}{x}, \frac{z}{x}]}$
% \qquad   if $\dim R ~ =  ~3$,\\
% \noindent
$R_1 = R[\frac{\m}{x}]_{(x, \frac{y}{x}, \frac{z_1}{x}, \ldots, \frac{z_{d-2}}{x})R [\frac{\m}{x}]}$  \quad and \quad
 $\m_1 =  (x, \frac{y}{x}, \frac{z_1}{x}, \ldots,  \frac{z_{d-2}}{x}) R_1$
  \quad if $d \ge 3$.\\
%$R_1 = R[\frac{y}{x}, \frac{z_1}{x}, \ldots, \frac{z_{d-2}}{x}]_{(x, \frac{y}{x}, \frac{z_1}{x}, %\ldots, \frac{z_{d-2}}{x})R}$  \quad and \quad
%$\m_1 =  (x, \frac{y}{x}, \frac{z_1}{x}, \ldots,  %\frac{z_{d-2}}{x})R[\frac{y}{x},\frac{z_1}{x},\ldots, \frac{z_{d-2}}{x},]_{(x, %\frac{y}{x},\frac{z_1}{x},\ldots, %\frac{z_{d-2}}{x})R[\frac{y}{x},\frac{z_1}{x},\ldots,\frac{z_{d-2}}{x}]}$,\\
%\medskip
\noindent
Then 
$$
    S_2~ := ~ R_1[\frac{\m_1}{y/x}]   \qquad \text{and} \qquad R_2 ~:=~ (S_2)_{N_2} \qquad  % \text{where},
$$
where $N_2 := (\frac{x^2}{y}, \frac{y}{x})S_2$~ if $d = 2$ and ~ $N_2 := (\frac{x^2}{y}, \frac{y}{x}, \frac{z_1}{y}, \ldots, \frac{z_{d-2}}{y})S_2$ if $d \ge 3$.

For $i \in \{ 0, 1, 2 \}$, let $P_i$ denote the special $*$-simple ideals associated to the  
extension $R_0 \prec R_i$. 
We list  generators for the ideals  $P_i$ and the values of the variables with respect 
to the order valuation rings $\ord_{R_i}$.  
 If $d = 2$, we have
$$
\begin{array}{lcl}
P_0 & = & (x,y)R = \m \\
P_1 & = & (x^2, y)R \\
P_2 & = & (x^3, x^2y, y^2)R,  
\end{array}
\qquad \qquad
\begin{array}{c|c|c}
	& x 	& y 	 \\ \hline
\ord_{R_0} & 1	& 1 	 \\ \hline
\ord_{R_1} & 1 & 2 	 \\ \hline
\ord_{R_2} & 2	& 3   \\ 
\end{array}.
$$
If $d \ge 3$, then 
$$
\begin{array}{lcl}
P_0 & = & (x,y,z_1, \ldots, z_{d-2})R = \m \\
P_1 & = & (x^2, y, z_1, \ldots, z_{d-2})R \\
P_2 & = & (x^3, x^2y, x (z_1, \ldots, z_{d-2}), (y, z_1, \ldots, z_{d-2})^2)R,  
\end{array}
\qquad \qquad
\begin{array}{c|c|c|c}
	& x 	& y 	& z_i \\ \hline
\ord_{R_0} & 1	& 1 	& 1 \\ \hline
\ord_{R_1} & 1 & 2 	& 2 \\ \hline
\ord_{R_2} & 2	& 3  & 4 \\ 
\end{array}.
$$
%If $\dim R = 4$ and $\m := (x,y,w, z)R$,  then 
%$$
%\begin{array}{lcl}
%P_0 & = & (x,y,w,z)R = \m \\
%P_1 & = & (x^2, y, w,  z)R \\
%P_2 & = & (x^3, x^2y, xw, xz, (y, w, z)^2)R,  
%\end{array}
%\qquad \qquad
%\begin{array}{c|c|c|c|c}
%	& x 	& y  &w 	& z \\ \hline
%\ord_{R_0} & 1	& 1 & 1	& 1 \\ \hline
%\ord_{R_1} & 1 & 2 & 2	& 2 \\ \hline
%\ord_{R_2} & 2	& 3  &4 & 4 \\ 
%\end{array}.
%$$

\noindent
As in \cite[Example 6.13]{HK1} or \cite[Cor.~5.9]{HKT},  if $d \ge 3$,  then  $\Rees  P_2  = \{ \ord_{R},\,  \ord_{R_2} \}$. 
If $d = 2$, then $\Rees P_2 = \{ \ord_{R_2} \}$ as in Discussion~\ref{1.32}.1.
\end{setting} 

\begin{discussion}   With notation as in Setting~\ref{6.10},  let $\Gamma := \{R_0, R_1, R_2\}$.  
For finitely supported ideals $I$ with $ \mathcal{BP}(I) = \Gamma$, we
observe that there are precisely 4 possibilities for the model 
$\Proj \overline{R[It]}$.  By   \cite[Theorem~5.4]{HKT} and 
the unique factorization theorem of Lipman  \cite[Theorem~2.5]{L},   the complete ideals $I$ 
such that $\mathcal{BP}(I) = \Gamma$  have the form 
$I = \m^i * P_{1}^j*P_{2}^k$, where $i$   and $j$ are  nonnegative 
integers   and $k$ is a positive integer.  There are the following 4 possible models
$\Proj \overline{R[It]}$.  
\begin{enumerate}
\item
Assume in the  factorization $I = \m^i * P_{1}^j*P_{2}^k$ that 
  $i$   and $j$ are both positive.   Then   $I$ has a saturated factorization, i.e.,  $\Proj R[It] = X_{\Gamma}$.  
We may take $i = j = k = 1$.  The ideal $\m*P_1*P_2 = \m P_1P_2$ gives the blowup.

\item
Assume in the  factorization $I = \m^i * P_{1}^j*P_{2}^k$ that $i > 0$ and $ j = 0$.
The ideals $\m^iP_{2}^k$ for $i$ and $k$ positive all have the same blowup. Thus we may assume $i = k = 1$. 
The ideal $\m*P_{2} = \m P_2$ gives this blowup.

\item 
Assume in the  factorization $I = \m^i * P_{1}^j*P_{2}^k$ that $i = j = 0$.
The ideals $P_{2}^k$ for $k$ a positive integer all have the same blowup.
Thus one may assume $k = 1$.  The ideal $P_2$ gives this blowup.

\item
Assume in the  factorization $I = \m^i * P_{1}^j*P_{2}^k$ that $i = 0$ and $ j > 0$.
The ideals $P^j_{1}*P_{2}^k$ with  $j$ and $k$ both  positive all have the same blowup. Thus we 
may assume $j = k = 1$.  The ideal $P_1*P_2 = P_1P_2$ gives this blowup.  
\end{enumerate}
\end{discussion}  

The four models and the  natural morphisms among these models are displayed in Diagram~\ref{eq5.20}.

\begin{equation*}  \label{eq5.20} \tag{6.2}
\xymatrix{
	& X_\Gamma  \ar[dl]_{f_{\m}} \ar[dr]^{f_{P_1}} & \\
\Proj R[P_1 P_2 t] \ar[dr]^{\phi_{P_1}} & 	& \Proj R[\m P_2 t] \ar[dl]_{\phi_{\m}} \\
	& \Proj R[P_2 t] \ar[d] \\
	& \Spec R &
}
\end{equation*}

There are significant differences between the case where $\dim R = 2$ and the case where $\dim R \ge 3$ 
that are related to the fact that $\Rees P_2 = \{\ord_{R_2} \}$ if  $\dim R = 2$ while  
$\Rees P_2 =   \{\ord_{R_2}, \ord_R \}$ if $\dim R \ge 3$.  In Example~\ref{6.25} we describe the 
situation where  $\dim R = 2$.

\begin{example}  \label{6.25}  Assume notation as in Setting~\ref{6.10} and that $\dim R = 2$. 
Thus $P_2 = (x^3, x^2y, y^2)R$ and $\Proj R[P_2t]$ has 2 singular points
$$
S_0 ~:= ~ R[\frac{x^3}{y^2}, \frac{x^2}{y}]_{(x, y, \frac{x^3}{y^2}, \frac{x^2}{y})R[\frac{x^3}{y^2}, \frac{x^2}{y}]} \quad
\text{and} \quad S_1 ~:=~ R[\frac{y}{x}, \frac{y^2}{x^3}]_{(x, \frac{y}{x}, \frac{y^2}{x^3})R[\frac{y}{x}, \frac{y^2}{x^3}]}.
$$
The local domain 
$S_0  \in  \Proj R[P_2t]$   is  dominated by $\ord_{R}$ and $S_1  \in \Proj R[P_2t]$
is  dominated by $\ord_{R_1}$.  The divisor class group $\Cl(S_0)$ is a cyclic group of order 3,  and 
the divisor class group $\Cl(S_1)$ is  a cyclic group of order 2.   The local domains $S_0$ and $S_1$ are 
both localizations of the affine chart $R[\frac{P_2}{x^3 + y^2}]$ and the divisor class group 
$\Cl(R[\frac{P_2}{x^3 + y^2}])$ is a cyclic group of order 6.  

The local domain $S_0$  is also on the 
model $\Proj R[P_1P_2t]$ and is the unique singular point on this model,  while the local domain $S_1$ is on the 
model $\Proj R[\m P_2t]$ and is the unique singular point on this model.   

With notation as in Diagram~\ref{eq5.20}, we have:
\begin{enumerate}
\item  The morphism $\phi_{P_1}$ is an isomorphism off the fiber 
$\phi_{P_1}^{-1}(S_1)$.
\item
The morphism $\phi_{\m}$ is an isomorphism off the fiber $\phi_{\m}^{-1}(S_0)$.
\item  
The morphism $f_{P_1}$ is an isomorphism off the fiber 
$f_{P_1}^{-1}(S_1)$.
\item
The morphism $f_{\m}$ is an isomorphism off the fiber $f_{\m}^{-1}(S_0)$. 
\end{enumerate}
This completes our description of the case where $\dim R = 2$.
\end{example}

Assume that $\dim R = 3$.
In Examples \ref{6.29}, \ref{6.30}, and \ref{samerees}, we consider the models obtained by blowing up the ideals $\m P_2$, $P_2$, and $P_1 P_2$, respectively.

\begin{example}\label{6.29}
Assume notation as in Setting~\ref{6.10} with $\dim R = 3$.
Consider the ideal $I = \m P_2$ and its blowup
$$
 \Proj R[It]  = \Proj  R[x^4t, x^3yt, x^2zt, xy^2t, xyzt, xz^2t, y^3t, y^2zt, yz^2t, z^3t].
$$
The transform of $I$ in $R_1$ is the ideal 
$$ 
I_1 : = ~ \left( x, ~ \left(\frac{y}{x}\right)^2, ~ \frac{z}{x} \right)R_1
$$
and $I_1$ is the special $*$-simple ideal associated to the pair $R_1 \prec R_2$, see \cite[Prop. 2.1]{L}.
The natural  morphism $\phi_{P_2} : \Proj R[I t] \rightarrow \Proj R [\m t]$ is an isomorphism off the fiber 
$\phi^{-1}_{P_2}(R_1)$
of $R_1$.  
  Moreover,  with $I_1$ the transform of $I$ in $R_1$,  
the restriction $\phi_{P_2} : \Proj R_1[I_1 t] \rightarrow \Spec R_1$  is as in Example~\ref{exam two bps}.
Thus the singular locus of $\Proj R[It]$ is determined by the center of $\ord_{R_1}$ on $\Proj R[It]$.
\end{example}

\begin{example}  \label{6.30}  
Assume notation as in Setting~\ref{6.10}  with $\dim R = 3$. 
Consider the ideal $P_2$, where
$$
 P_{2}~  = ~  (x^3,~ x^2y, ~ xz,~ y^2, ~ ~ yz, z^2)R  
$$
is the special $*$-simple ideal associated to the extension $R_0 \prec R_2$. 
The blowup of $P_2$ is 
$$
 \Proj R[P_2 t] = \Proj R[x^3t, x^2yt, xzt, y^2t, yzt, z^2t].  
$$

We consider  affine charts of $\Proj R [ P_2 t]$ and examine  their singularities. 
The ideal  $(y^2, xz,  z^2,  x^3)R$ is a monomial  reduction  of $P_2$.   
It suffices to consider  the affine charts 
$R [ \frac{P_2 }{ \rho } ]$, where $\rho \in \{ y^2, xz,  z^2,  x^3 \}$.  We have the four affine charts:
$$
A~ :=~ R[\frac{P_2}{y^2}] \qquad  B~:=  ~ R[\frac{P_2}{xz}] \qquad  C~:=~ R[\frac{P_2}{z^2}] 
\qquad D~ := ~ R[\frac{P_2}{x^3}].
$$

The affine chart $A  = R[ \frac{x^3}{y^2},  \frac{x^2}{y},  \frac{xz}{y^2}, \frac{z}{y} ]$ 
is not contained in $\ord_{R_1}$ since $\frac{x^3}{y^2}$ and $\frac{xz}{y^2}$ have negative value for $\ord_{R_1}$.
By individually inverting each of the generators of $\m_A :=  (x, y, \frac{z}{y}, \frac{xz}{y^2}, \frac{x^2}{y}, \frac{x^3}{y^2})A$ and checking that the ring we obtain is regular, we conclude that $\m_A$ is the unique singular point of $A$.
We compute that $S := A_{\m_A}$ is a 3-dimensional Cohen-Macaulay normal local domain of embedding dimension 6 and multiplicity 4 where, for instance, $(\frac{z}{y}, \frac{x^3}{y^2}, y - \frac{xz}{y^2}) S$ is a system of parameters for $S$.
The ring $A$ is also an affine chart for $\Proj R[P_1P_2t]$ and $S$ is the unique singular point of 
the model $\Proj R[P_1P_2t]$.   We examine this in more detail in Example~\ref{samerees}.

The affine chart 
$C =   R[\frac{P_2}{z^2}] =  R[\frac{x}{z}, \frac{y}{z}]$ is regular.   

The affine chart  $D = R[\frac{y}{x}, \frac{z}{x^2}, \frac{y^2}{x^3}]$ 
is contained in  the valuation domain $\ord_{R_1}$.   The center of $\ord_{R_1}$ on $D$ is the height 2 
prime ideal $Q := (x,  \frac{y}{x}, \frac{y^2}{x^3})D$.    We compute that $D_Q$ is a 2-dimensional normal 
local domain of multiplicity 2.  Moreover, the singular locus of $D$ is the set of prime ideals of $D$ that 
contain $Q$.    

The affine chart  $B = R[\frac{x^2}{z}, \frac{y^2}{xz},  \frac{y}{x}, \frac{z}{x}]$ 
is also  contained in $\ord_{R_1}$.  The   center of $\ord_{R_1}$ on $B$ is the height 2 prime ideal
$Q' := (\frac{y^2}{xz},  \frac{y}{x}, \frac{z}{x})B$.  We   have $B_{Q'} = D_Q$,  and 
 compute that the singular locus of $B$ is the set of 
prime  ideals of $B$ that contain $Q'$.  

Since $\m B$ and $\m D$ are principal, the affine charts $B$ and $D$ of $\Proj R [P_2 t]$ are 
also affine charts of $\Proj R [\m P_2 t]$ and the morphism  $\phi_{\m}$ of Diagram~\ref{eq5.20} 
is an isomorphism on these affine charts.

 The local domain  on $\Proj R[P_2t]$ dominated by $\ord_{R_1}$ is  $B_{Q'} = D_Q$.
 The morphism $\phi_{P_1}$   of Diagram~\ref{eq5.20}  is biregular  at all the local domains 
 $S \in \Proj R[P_2t]$ except those $S$ such that  $B_{Q'} = D_Q$ is a localization of $S$,
 that is,  the morphism $\phi_{P_1}$  is biregular  
 off the center of $\ord_{R_1}$ on 
$\Proj R[P_2t]$. 
\end{example}

\begin{example}  \label{samerees}
Assume notation as in Setting~\ref{6.10}    with $ \dim R = 3$  and let 
$$
I ~= ~ {P}_{1} {P}_{2} ~=~( z^3, ~yz^2, ~xz^2, ~y^2z, ~xyz, ~ y^3,  ~x^3z, ~x^2y^2, ~ x^3y, ~x^5 )R.
$$   
Let   $J := \m P_1 P_2 = \m I$. 
By Remark~\ref{Gamma} the ideal $J$ has a saturated factorization, i.e., $R[Jt] = X_{\Gamma}$. 
We  have  $\Rees I = \Rees J   = \{ \ord_{R_0}, \ord_{R_1}, \ord_{R_2}\}$.
 We compute  that  
$\Proj R[It]$ is normal and has precisely one singular point. 

We use that $K := (z^3, xz^2, xyz, y^3, x^3y, x^5)R$  is a monomial  reduction of $I$ 
and check the affine charts associated to  each of the monomial generators of $K$.   
The affine chart
 $A:= R[\frac{I}{y^3}] = R[ \frac{z}{y}, \frac{xz}{y^2}, \frac{x^2}{y}, \frac{x^3}{y^2}]$ is the only affine chart that has a 
 singularity.  
    We compute  that the affine chart  $A$ is a 3-dimensional normal  domain, 
  and  prove below that  the maximal ideal $\m_A :=  (\m, \frac{z}{y}, \frac{xz}{y^2}, \frac{x^2}{y}, \frac{x^3}{y^2})A$ is the unique singular point of $A$.

  Observe that $\ord_{R_0}$ and $\ord_{R_2}$ contain $A$, but $\ord_{R_1}$ does not. The center of $\ord_{R_0}$ on $A$ is the 
  height-one prime ideal $\p_0 := (\m,  \frac{x^2}{y}, \frac{x^3}{y^2})A$, and the center of $\ord_{R_2}$ on $A$ is the 
  height-one prime ideal $\p_2 := (\m, \frac{z}{y}, \frac{x^2}{y})A$.

We  show that the divisor class group of $A$ is an infinite cyclic group. 
Since $A[\frac{1}{y}] = R[\frac{1}{y}]$,  a theorem of  Nagata   (\cite[Theorem 6.3, p. 17]{PS}) 
implies that  the divisor class group $\Cl(A)$   of $A$  is generated by the minimal primes of $yA$.
The minimal primes of $yA$ and $y^3A = IA$ are equal. Therefore, we see that $\Min yA  = \{ \p_0, \, \p_2 \}$. 
Since $\ord_{R_0} (y) = 1$ and $\ord_{R_2} (y) = 3$ and $yA$ is an unmixed height $1$ ideal, we have $yA = \p_0 \cap \p_2^{(3)}$. 
The divisor class group $\Cl(A)$ is generated by $[\p_0], [ \p_2]$, where $[ \phantom{p} ]$ represent the class of a height $1$ prime ideal in $\Cl(A)$. 
The equality $yA = \p_0 \cap \p_2^{(3)}$ gives a relation $[\p_0] = -3 [ \p_2]$, and in fact this is the only relation since $\Cl(A)$ is not torsion by Theorem \ref{3.10}(2).
Therefore, we have $\Cl(A) = \langle [ \p_2 ] \rangle$.

To prove  that the singular locus of $A$ is $\m_A$, 
 let $\q$ be a prime ideal of $A$  and consider the  localization $A_{\q}$.
 If $\q$ does not contain  both  $\p_0$ and $\p_2$,  then  by Nagata's Theorem and the relation $[\p_0] = -3 [ \p_2]$,
 the ring 
  $A_{\q}$ has torsion divisor class group.  Theorem \ref{3.10}(2)  then   implies that $A_{\q}$ is regular. 
  Assume that  $\q$ contains both   $\p_0$ and $\p_2$.    Notice that $(\p_0, \p_2, \frac{xz}{y^2})A = \m_A$. 
  Hence   if $\q \ne \m_A$, then    $\frac{xz}{y^2}  \notin \q$. 
In $A [ \frac{y^2}{xz}]$, we have $\frac{y}{x} =   \frac{z}{y} \cdot\frac{y^2}{xz}$ and $\frac{z}{x} = \frac{y}{x} \cdot \frac{z}{y}$. 
This implies that $\m A [ \frac{y^2}{xz}]    = x A [ \frac{y^2}{xz}] $ is principal. 
In particular, we have $\m A_{\q}$ is principal. Therefore, $A_{\q}$ is on the regular model  $X_\Gamma$ and  hence is regular. 
\end{example}

Assume now that $\dim R  = d \ge 4$ and denote $\m = (x, y, z_1, \ldots, z_{d-2})$.  
The structure of the special $*$-simple  ideal $P_2$ 
is similar  to the 3-dimensional case,   but with more generators as we increase $d$.  
The minimal number of generators of $P_2$ is the same as  that  for $\m^2$.  The  difference  
between   $\m^2$ and   $P_2$ is   that
 $x^2$  is replaced by $x^3$ and
$xy$ by $x^2y$.  Thus if $\dim R = d$, then   
$$
P_2 ~=~(x^3, x^2y, x (z_1, \ldots, z_{d-2}), (y, z_1, \ldots, z_{d-2})^2)R
$$
is minimally generated by $\binom{d+1}{2}$ elements.   We have $\Rees P_2 = \{\ord_{R_2},  \ord_R \}$.   

As in the case where $\dim R = 3$,   the affine chart 
$A := R[\frac{P_2}{y^2}]$ of $\Proj R[P_2t]$ contains precisely one prime ideal for which the 
localization of $A$ is not regular.
We have
$$
A ~=  ~R[\frac{P_2}{y^2}] ~ = ~ R[ \frac{x^3}{y^2}, ~  \frac{x^2}{y}, ~ \frac{x z_1}{y^2}, ~ \ldots, ~  \frac{xz_{d-2}}{y^2}, ~ \frac{z_1}{y}, ~ \ldots, ~ \frac{z_{d-2}}{y} ]
$$
and $\m_A := (x, ~ y,  ~ \frac{x^3}{y^2}, ~  \frac{x^2}{y}, ~ \frac{x z_1}{y^2}, ~  \ldots, ~ \frac{x z_{d-2}}{y^2}, ~ \frac{z_1}{y}, ~ \ldots, ~ \frac{z_{d-2}}{y})A$.  
Notice that $P_1 A = y A$, thus $A$ is also an affine chart on $\Proj R [P_1 P_2 t]$.
We have  $\ord_{R_0}$ and $\ord_{R_2}$ contain $A$,  while $\ord_{R_1}$ does not.  
The center of $\ord_{R_0}$ on $A$ is the 
  height-one prime ideal $\p_0 := (x, ~y,~  \frac{x^2}{y}, \frac{x^3}{y^2})A$, and the center of $\ord_{R_2}$ on $A$ is the 
  height-one prime ideal $\p_2 := (x,~y,~ \frac{x^2}{y},  ~\frac{z_1}{y}, ~\ldots, ~\frac{z_{d-2}}{y})A$.
  
  The proof given above for the case where $\dim R = 3$ also applies here to show that the divisor class group
  $\Cl(A)$ is the infinite cyclic group generated by $ [ \p_2 ] $.
To prove that $\m_A$ is the unique prime ideal of $A$ at which the localization is not regular, let $\q \in \Spec A$ be such that $A_{\q}$ is not regular.
Since $A_{\q}$ must have nontorsion divisor class group by Theorem~\ref{3.10}, $\q$ contains $\p_0 + \p_2$.
The remaining generators of $\m_A$ are of the form $\frac{x u}{y^2}$, where $u$ varies among the variables $z_1, \ldots, z_{d-2}$.
If $\q \ne \m_A$, then by symmetry of the variables $z_1, \ldots, z_{d-2}$, we may assume $\frac{x z_1}{y^2} \notin \q$.
But in $A [ \frac{y^2}{xz_1}]$, a simple computation as above shows that $\m$ extends to a principal ideal.
Therefore, $A_{\q}$ is on the regular model  $X_\Gamma$ and  hence is regular.  
Thus the maximal ideal $\m_A$ is the unique singular point of $A$.

\section{Singularities on the blowup of finitely supported ideals}

We are interested in  algebraic properties of  the singularities of local rings 
on the normalized blowup of  finitely supported ideals.  
Huneke and Sally in \cite{HS} examine the structure of  2-dimensional normal local 
rings   $S$   that birationally dominate a 2-dimensional regular local ring.  Using algebraic 
techniques,   Huneke and Sally  recover much information that was known from work of Lipman 
and Artin  about the structure of  $S$ 
 such as that $S$ has a rational singularity and minimal multiplicity. 
For example,   they show  that $S$ is Gorenstein if and 
 only if $S$  has multiplicity  at most  $2$  \cite[Cor.~1.6]{HS}. 
 
  Example~\ref{7.10}  is a further discussion of Example~\ref{exam:twoBranches} regarding 
  the blowup of a finitely supported ideal.

\begin{example}  \label{7.10}
Let $(R, \m)$ be a regular local ring with $\m = (x, y, z)$ and let $J = (xz, y^2, z^3, y z^2, x^2 y, x^3)$.
Consider the affine chart of $\Proj R [J t]$ obtained by homogeneous localization at the element $xzt$.
This gives the ring $A = R [\frac{y^2}{x z}, \frac{z^2}{x}, \frac{y z}{x}, \frac{x y}{z}, \frac{x^2}{z}]$.
We observe below that 
$\ord_{R_2}$ and $\ord_{R_4}$ are centered on height $1$ primes of $A$, and $\ord_{R_0}$, $\ord_{R_1}$, and $\ord_{R_3}$ are centered on height $2$ primes of $A$. 
All of these prime ideals  are  contained in the maximal ideal $\m_A$, where
$$
\m_A ~:= ~  (x,~ y,~ z, ~\frac{y^2}{x z},~ \frac{z^2}{x},~ \frac{y z}{x},~ \frac{x y}{z},~ \frac{x^2}{z})A.
$$
Thus $\m_A$ is 
 generated by $\m$ and the five listed ring  generators of $A$ over $R$. 
The powers of $\m_A$ do not define a valuation, since $y \in \m_A \setminus \m_A^2$ and $y^2 \in \m_A^3$ by the relation $x z (\frac{y^2}{x z}) = y^2$. \\

Using the chart  
$$
\begin{array}{c|c|c|c}
	& x 	& y 	& z \\ \hline
\ord_{R_0} & 1	& 1 	& 1 \\ \hline
\ord_{R_1} & 1 & 2 	& 2 \\ \hline
\ord_{R_2} & 2	& 3  & 4 \\ \hline
\ord_{R_3} & 2 	& 2	& 1 \\ \hline
\ord_{R_4} & 4	& 3	& 2,
\end{array}
$$
we compute the centers $Q_i$ of $\ord_{R_i}$ on $A$ for $i \in \{ 0, \dots, 4 \}$. They are
\begin{align*}
Q_0 ~&=  ~ (x,~y,~z, ~\frac{z^2}{x},~ \frac{y z}{x},~ \frac{x y}{z},~ \frac{x^2}{z})A,  \\
Q_1 ~&= ~(x,~y,~z, ~\frac{y^2}{x z},~ \frac{z^2}{x}, ~\frac{y z}{x},~ \frac{x y}{z})A, \\
Q_2 ~&= ~ (x,~y,~z, ~\frac{z^2}{x},~ \frac{y z}{x},~ \frac{x y}{z})A, \\
Q_3~ &= ~ (x,~y,~z, ~\frac{y^2}{x z},~ \frac{y z}{x},~ \frac{x y}{z},~ \frac{x^2}{z})A, \hbox{ and } \\
Q_4 ~ &= ~(x,~y,~z, ~\frac{y z}{x}, ~\frac{x y}{z},~ \frac{x^2}{z})A. 
\end{align*}
Since $\Rees J = \{ \ord_{R_2}, \ord_{R_4} \}$,  the prime ideals $Q_2$ and $Q_4$ are of height~$1$ and  the prime ideals  $Q_0, Q_1, Q_3$ are of height $2$.   
The ideal  $L   := (\frac{z^2}{x}, \frac{x^2}{z}, \frac{y^2}{xz}) A$   is  a reduction of $\m_A$. 
Direct computation shows that  $L \m_A = \m_A^2$.
 Therefore the reduction number of  $\m_A$ with respect to $L$ is $1$,  
 and the local ring  $A_{\m_A}$ has minimal multiplicity  
 with Hilbert-Samuel multiplicity  $e(A_{\m_A}) = 6$.

  We have $A = R[\frac{1}{xz}] \cap A_{Q_2} \cap A_{Q_4}$.   The divisor class group  $\Cl(A)$ is generated by the 
 classes $[Q_2]$ and $[Q_4]$.  Notice that $\frac{z^2}{x}A = Q_2^{(6)}$   and  $\frac{x^2}{z}A = Q_4^{(6)}$.  
 Also we have
 $$
 xA ~= ~ Q_2^{(2)} ~\cap ~ Q_4^{(4)}   \quad \text{ and } \quad zA ~=~  Q_2^{(4)} ~\cap ~ Q_4^{(2)}.  
$$
The localization $A[(\frac{z^2}{x})^{-1}]  =    R[\frac{x}{z^2}]  =  R[\frac{1}{xz}] \cap A_{Q_4}$  is contained in $\ord_{R_3}$ and hence is not a UFD
by Corollary~\ref{2.15}.  Since $zA[\frac{x}{z^2}]   = Q_4^{(2)}A[\frac{x}{z^2}]$,  the divisor class group $\Cl(A[\frac{x}{z^2}])$
is a cyclic group of order 2,  and  the divisor class group  $\Cl(A)$ is the direct sum of a cyclic group of 
order 6 with a cyclic group of order 2.
\end{example}

With $A$  and $\m_A$  as in Example~\ref{7.10},  we noted above that the powers of the maximal 
ideal of the local domain $S := A_{\m_A}$ do not define a valuation.  The ring $S$ is on the blowup 
$\Proj R[Jt]$ of a finitely supported ideal $J$ of a regular local ring.  
It seems natural to ask:

\begin{question}  \label{7.20}  Let $I$ be a finitely supported ideal of a regular local ring $R$ and 
let $S \in \Proj \overline{R[It]}$.
Under what conditions do the powers of the maximal ideal of $S$ define a valuation?
\end{question}

 Let $R$ be a regular local ring with $\dim R \ge 2$,  and let 
$\Gamma$  be a finite set of infinitely near points to $R$ that satisfies the 3 
conditions of Remark~\ref{Gamma}  and thus is the set of base points of a finitely 
supported ideal of $R$.   We ask:

\begin{question}
Among the finitely supported ideals $I$ of $R$ with $\mathcal{BP} (I) = \Gamma$, how many distinct projective models $\Proj \overline{R [I t]}$ exist?
If $\Gamma$ has $1$ terminal point and $n$ points which are not terminal points, are there precisely $2^n$ distinct such models?
\end{question}

\end{document}